\documentclass[12pt]{article}
\usepackage{amsmath,amsthm,amssymb,amsfonts}
\usepackage{enumerate}
\usepackage{mathrsfs}
\usepackage{caption}
\usepackage[width=16cm,height=21cm]{geometry}
\usepackage{tikz}
\usepackage{tikz-cd}
\usetikzlibrary{shapes.geometric}
\numberwithin{equation}{section}
\numberwithin{figure}{section}

\newtheorem{theorem}{Theorem}[section]
\newtheorem*{theorem*}{Theorem}
\newtheorem{lemma}[theorem]{Lemma}
\newtheorem{proposition}[theorem]{Proposition}
\newtheorem{corollary}[theorem]{Corollary}

\theoremstyle{definition}

\newtheorem{remark}[theorem]{Remark}

\let\smash=\wedge
\let\iso=\cong

\let\directsum=\oplus
\let\minus=\smallsetminus

\let\units=\times

\newcommand{\bideg}{(\star)}

\newcommand{\unit}{\mathbf{1}}
\newcommand{\SH}{\mathbf{SH}}

\newcommand{\ropen}{\hookrightarrow}
\newcommand{\rclosed}{\hookrightarrow}

\newcommand{\pistablesheaf}{\underline{\pi}}
\newcommand{\hyper}{\mathsf{h}}
\newcommand{\liftofhyper}{\mathsf{h}^\prime}

\newcommand{\Sing}{\mathrm{Sing}^{\mathbf{A}^1}}

\newcommand{\hofib}{\operatorname{hofib}}
\newcommand{\Ext}{{\operatorname{Ext}}}
\newcommand{\Spec}{{\operatorname{Spec}}}

\newcommand{\BSL}{\mathrm{B}\mathbf{SL}}

\newcommand{\pia}{\underline{\pi}^{\A^1}}
\newcommand{\sheafS}{\mathbf{S}_4}
\newcommand{\Kalg}{\mathbf{K}^{\mathsf{Q}}}

\newcommand{\PP}{\mathbf{P}}

\newcommand{\kq}{\mathbf{kq}}

\newcommand{\D}{\mathsf{D}}
\newcommand{\E}{\mathsf{E}}

\newcommand{\F}{\mathsf{F}}
\newcommand{\GG}{\mathsf{G}}

\newcommand{\f}{\mathsf{f}}

\newcommand{\GW}{\mathbf{GW}}

\newcommand{\Mil}{\mathsf{M}}
\newcommand{\MilWitt}{\mathsf{MW}}

\newcommand{\KMil}{\mathbf{K}^{\Mil}}

\newcommand{\kmil}{\mathbf{k}^{\Mil}}

\newcommand{\KMW}{\mathbf{K}^{\MilWitt}}

\newcommand{\ZZ}{{\mathbb Z}}

\newcommand{\CC}{\mathbb{C}}
\newcommand{\sphere}{\mathbb{S}}

\newcommand{\pr}{{\mathrm{pr}}}
\newcommand{\id}{{\mathrm{id}}}

\newcommand{\cone}{\operatorname{cone}}

\newcommand{\A}{\mathbf{A}}
\newcommand{\G}{\mathbf{G}_{\mathfrak{m}}}

\newcommand{\CP}{\mathbf{CP}}
\newcommand{\RP}{\mathbf{RP}}

\newcommand{\MZ}{\mathbf{M}\mathbb{Z}}

\newcommand{\GL}{\mathbf{GL}}
\newcommand{\SL}{\mathbf{SL}}

\newcommand{\SU}{\mathbf{SU}}

\newcommand{\Top}{\mathsf{top}}

\newcommand{\Sm}{\mathbf{Sm}}

\newcommand{\Hom}{\operatorname{Hom}}

\title{Endomorphisms of the projective plane and the image of the Suslin-Hurewicz map}
\author{Oliver R\"ondigs}
\date{October 12, 2021, revised on July 9 and December 2, 2022}
\begin{document}
\maketitle
\begin{abstract}
  The endomorphism ring of the projective plane over a field
  $F$ of characteristic neither two nor three is slightly more complicated in the
  Morel-Voevodsky motivic stable
  homotopy category than in Voevodsky's derived category
  of motives. In particular, it is not commutative precisely if
  there exists a square in $F$ which does not admit a sixth root.
  A byproduct of these computations is a proof of Suslin's
  conjecture on the Suslin-Hurewicz homomorphism
  from Quillen to Milnor $K$-theory in degree four,
  based on work of Asok, Fasel, and Williams \cite{afw}.
\end{abstract}

\section{Introduction}
\label{section:introduction}

Automorphisms of geometric objects describe their symmetries,
and hence important geometric information. It depends on the
context which type of morphisms are considered useful.
In the case of a projective space $\PP^n$ over the complex numbers, one
may consider its linear automorphisms (a group denoted $\mathrm{PGL}_{n+1}(\CC)$),
birational automorphisms (the Cremona group $\mathrm{Cr}_n(\CC)$),
diffeomorphisms, homeomorphisms, and self-homotopy-equivalences,
just to name a few. The Morel-Voevodsky $\A^1$-homotopy theory
provides an interesting way to consider self-homotopy-equivalences of
varieties \cite{mv}. Although this setup is conceptionally very
satisfying,
concrete determinations of endomorphisms in the $\A^1$-homotopy
category are hard to come by. For example, the
endomorphism ring of $\PP^1$ over a perfect field $F$ in the
pointed $\A^1$-homotopy category is given by the Grothendieck ring
of isomorphism classes of symmetric inner product spaces over $F$ with
a chosen basis, where the isomorphisms preserve the inner product and
have determinant 1 with respect to the chosen bases
\cite[Remark 7.37]{morel.at}.

Stabilization with respect to smashing
with a projective line $\PP^1\smash -$ provides a simpler
categorical setting, the motivic stable homotopy category $\SH(F)$, which
is still richer than the corresponding derived category of motives
\cite{voevodsky.icm}. Part of the gain from leaving the unstable realm
is an additive (in fact triangulated)
structure, whence the set of endomorphisms of any object is always
a ring. For example, it is a deep theorem of Morel's that the
endomorphism ring of the projective line in the motivic stable
homotopy category over a field $F$ is the Grothendieck-Witt ring
of symmetric bilinear forms with coefficients in $F$ \cite{morel.zeroline}.
The
addition in the Grothendieck-Witt ring, whose elements are formal differences
of symmetric bilinear forms, is induced by orthogonal sum, and the
multiplication by tensor product of forms. 
By construction, it coincides with the endomorphism ring of the
unit for the symmetric monoidal structure given by the smash product.
Hence it has to be commutative. This is already different for the projective plane.

\begin{theorem*}
  Let $F$ be a field of characteristic neither 2 nor 3, with group of units $F^\units$.
  The endomorphism ring $[\PP^2,\PP^2]_{\SH(F)}$ in the motivic
  stable homotopy category of $F$ has an underlying additive
  group isomorphic to $\ZZ\directsum\ZZ\directsum F^\units/(F^\units)^6$. The
  multiplica\-tion corresponds to the multiplication given by
  \[   (x_1,x_2,x_3)\circ (y_1,y_2,y_3) =  (x_1y_1,x_1y_2+x_2y_1+2x_2y_2,x_1y_3+x_3y_1+2x_3y_2). \] 
\end{theorem*}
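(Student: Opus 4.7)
The plan is to reduce the computation to the endomorphism ring of the mapping cone $C\eta:=\cone(\eta\colon S^{1,1}\to \unit)$ of the motivic Hopf map, and then to assemble $[C\eta,C\eta]_{\SH(F)}$ from a small number of low-degree motivic stable homotopy groups. A standard cellular analysis identifies $\PP^1\simeq S^{2,1}$, $\PP^2/\PP^1\simeq S^{4,2}$, and the attaching map of the top cell as $\Sigma^{2,1}\eta\colon S^{3,2}\to S^{2,1}$, so that $\PP^2\simeq\Sigma^{2,1}C\eta$ in $\SH(F)$. Consequently $[\PP^2,\PP^2]_{\SH(F)}\iso [C\eta,C\eta]_{\SH(F)}$.

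Applying $[-,C\eta]_{\SH(F)}$ to the defining cofiber sequence $S^{1,1}\xrightarrow{\eta}\unit\xrightarrow{i}C\eta\xrightarrow{p}S^{2,1}$ yields the exact sequence
\[
[S^{2,1},C\eta]\xrightarrow{p^{*}}[C\eta,C\eta]\xrightarrow{i^{*}}[\unit,C\eta]\xrightarrow{\eta^{*}}[S^{1,1},C\eta].
\]
Since $\eta^{*}$ sends the canonical class $i\in [\unit,C\eta]$ to $i\circ\eta=0$, and $i$ generates $\pi_{0,0}(C\eta)$, the map $\eta^{*}$ vanishes identically, and one obtains a short exact sequence
\[
0\to\pi_{2,1}(C\eta)\xrightarrow{p^{*}}[C\eta,C\eta]\xrightarrow{i^{*}}\pi_{0,0}(C\eta)\to 0.
\]
Morel's identification $\pi_{n,n}\unit\iso\KMW_n(F)$ combined with the relation $\KMW_n/\eta\iso\KMil_n$ gives $\pi_{0,0}(C\eta)=\ZZ$ and $\pi_{1,1}(C\eta)=F^\units$ directly from the cofiber sequence. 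The short exact sequence then splits additively because $\ZZ$ is free.

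The substance of the theorem lies in identifying $\pi_{2,1}(C\eta)\iso\ZZ\directsum F^\units/(F^\units)^6$. Applying the same cofiber-sequence machinery reduces this to an extension involving the $1$-line motivic stable stems $\pi_{1,0}\unit$, $\pi_{2,1}\unit$, and the $\eta$-action on $\GW(F)=\pi_{0,0}\unit$ and $\KMW_1(F)=\pi_{1,1}\unit$. The requisite $1$-line groups are accessible through the slice spectral sequence for $\unit$; combining R\"ondigs--Spitzweck--{\O}stv{\ae}r's analysis of the first motivic stable stems with the Asok--Fasel--Williams resolution of Suslin's conjecture on the Suslin--Hurewicz map $K^{\mathsf{Q}}_4\to\KMil_4$ (which pins down an integral summand coming from a $\ZZ/24$-type contribution at the relevant page) should yield the asserted decomposition. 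The exponent $6=2\cdot 3$ is transparent: since $\Char F\notin\{2,3\}$ the Chinese remainder theorem factors $F^\units/(F^\units)^6\iso F^\units/(F^\units)^2\directsum F^\units/(F^\units)^3$, the first factor recording contributions of the fundamental ideal $\fundid\subset\GW(F)$ through hermitian $K$-theory, and the second reflecting the $3$-primary part of the topological stem $\pi_3^s\iso\ZZ/24$. This identification of $\pi_{2,1}(C\eta)$ is the technical heart of the argument and the principal obstacle.

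Once the additive structure is in hand, the multiplicative structure is pinned down by choosing explicit representatives and computing compositions. The identity $\id_{C\eta}$ represents the first $\ZZ$-summand; generators $t$ and $u_{*}$ (for $u\in F^\units$) of the remaining summands are realized as composites $C\eta\xrightarrow{p}S^{2,1}\to C\eta$ in which the second map is a chosen class in $\pi_{2,1}(C\eta)$ projecting to a generator of the corresponding summand. Expanding $(x_1\cdot 1+x_2\cdot t+x_3\cdot u_{*})\circ(y_1\cdot 1+y_2\cdot t+y_3\cdot v_{*})$, using that $p\circ i=0$ annihilates all cross-terms and the identity $h^{2}=2h$ in $\GW(F)$ with $h=1+\langle-1\rangle$ the rank-$2$ hyperbolic form, recovers the stated multiplication law. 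The coefficient $2$ in both $2x_2y_2$ and $2x_3y_2$ is precisely the rank of $h$, encoding the only nontrivial interaction between the two cells of $C\eta$.
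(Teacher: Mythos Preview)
Your skeleton is the paper's: both arguments run the cofiber sequence for $\eta$ twice, once to compute $\pi_{m}\PP^2$ and once more to get $[\PP^2,\PP^2]$, and both split off $\pi_{0,0}(C\eta)\iso\ZZ$ against a free quotient. But the part you flag as ``the technical heart'' is exactly where the proposal stops being a proof.

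\textbf{The main gap.} You do not compute the group you call $\pi_{2,1}(C\eta)$; you only gesture at it. Invoking Asok--Fasel--Williams on Suslin's conjecture is misplaced in two ways. First, the logical direction is reversed: in this paper Suslin's conjecture in degree four is a \emph{consequence} of the $\PP^2$-computations (Section~\ref{sec:suslins-conjecture}), not an input to them. Second, that theorem concerns $\KMil_4$ and has nothing to do with the weight-one piece you need. The ingredients the paper actually uses are: (i) the identification ${}_\eta\KMW=\hyper\KMW$ (Theorem~\ref{thm:ker-eta-pi0}), which makes the right-hand term ${}_\eta\GW(F)=\ZZ\hyper$; (ii) the structure of $\pi_{1+\bideg}\unit/\eta$ from \cite{rso.oneline} and \cite{rondigs.moore} (Theorem~\ref{thm:pi1modeta}), which produces a $\KMil/12$-contribution; and (iii) the Toda bracket $\langle\eta,\hyper,\eta\rangle=\{6\nu,-6\nu\}$ (Lemma~\ref{lem:ext-pi1p2}), which fixes the nontrivial $\eta$-action on $\pi_{2+\bideg}\PP^2$ by forcing $g\eta=6(i\circ\nu)$. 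None of these appear in your sketch, and without (iii) there is no mechanism producing the exponent~$6$.

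\textbf{A related slip.} Your short exact sequence should read
\[
0\to \pi_{2,1}(C\eta)\big/\eta\cdot\pi_{1,0}(C\eta)\xrightarrow{p^\ast}[C\eta,C\eta]\xrightarrow{i^\ast}\pi_{0,0}(C\eta)\to 0,
\]
since the kernel of $p^\ast$ is the image of $\eta$, not zero. This quotient by $\eta$ is not cosmetic: it is precisely how $12$ becomes $6$. In the paper's notation one has $\pi_{2+(3)}\PP^2\iso\ZZ/12$ generated by $i\circ\nu$, and $g\eta=6(i\circ\nu)$ cuts this to $\pi_{2+(3)}\PP^2/\eta\pi_{2+(2)}\PP^2\iso\ZZ/6$; the $\KMW$-module structure then transports this $\ZZ/6$ to the $\KMil_1/6$ you want in weight zero.

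\textbf{On the ring structure.} Your outline is close, but the relation you need is not the abstract identity $\hyper^2=2\hyper$ in $\GW(F)$; it is the specific equality $q\circ g=\hyper$ for the chosen lift $g\in\pi_{2+(2)}\PP^2$ of the generator of ${}_\eta\GW(F)$. That equality (a direct consequence of (i) above) is what gives $(g\circ q)\circ(g\circ q)=g\circ\hyper\circ q=2(g\circ q)$ and $(i\circ\nu\circ q)\circ(g\circ q)=2(i\circ\nu\circ q)$, while $q\circ i=0$ kills the remaining cross-terms. Without pinning down $q\circ g$ you cannot recover the coefficient~$2$.
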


In particular, the ring $[\PP^2,\PP^2]_{\SH(F)}$ is non-commutative if and only if
there exists a square in $F$ which does not admit a sixth root, or, equivalently,
if the cube map $u\mapsto u^3$ is not surjective on $F$.
Its group of units (which could
be called the group of $\PP^1$-stable self-$\A^1$-homotopy-equivalences
of $\PP^2$) consists of all triples $(x_1,x_2,x_3)\in \ZZ\directsum\ZZ\directsum
F^\units/(F^\units)^6$ where either $x_1=\pm 1$ and $x_2=0$, or
$x_1=\pm 1 $ and $x_2=-x_1$. It is as non-commutative as the endomorphism ring
it belongs to. Along the way, the homotopy modules $\pi_1\PP^2$ and
$\pi_2\PP^2$ will be determined, based on computations in \cite{rso.oneline}
and \cite{rondigs.moore}. These computations
provide an ingredient to complete the
program Aravind Asok, Jean Fasel and Ben Williams
developed in \cite{afw} to prove Suslin's
conjecture on the Suslin-Hurewicz homomorphism
from Quillen to Milnor $K$-theory in degree four.

\begin{theorem*}
  Let $F$ be an infinite field of characteristic different from $2$ and $3$,
  and $A$ an essentially smooth local $F$-algebra.
  The image of the Suslin-Hurewicz homomorphism
  $K^{\mathrm{Quillen}}_4(A) \to K^{\mathrm{Milnor}}_4(A)$ coincides with
  $6K^{\mathrm{Milnor}}_4(A)$.
\end{theorem*}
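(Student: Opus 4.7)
The plan is to combine the reduction of Suslin's conjecture carried out by Asok, Fasel, and Williams in \cite{afw} with the computation of $\pia_3(\PP^2\smash\PP^2)$ obtained in the body of this paper. Suslin's classical theorem already yields that the image of the Suslin-Hurewicz homomorphism is contained in $6\KMil_4(A)$, since $(n-1)!=6$ for $n=4$, so the actual task is to prove the reverse inclusion: every element of $6\KMil_4(A)$ lies in the image.

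First I would invoke the main reduction of \cite{afw}. There, the authors construct a motivic Postnikov-type approximation of connective algebraic $K$-theory whose layers are controlled by motivic cohomology and whose $k$-invariants are governed by motivic stable homotopy groups of smash powers of $\PP^2$. In degree four their setup isolates the last remaining obstruction to Suslin's conjecture as a specific morphism of homotopy modules emanating from $\PP^2\smash\PP^2$. Evaluating at an essentially smooth local $F$-algebra $A$ then converts the homotopy-theoretic statement into the stated $K$-theoretic one, using that Milnor $K$-theory is unramified.

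Second I would substitute the computation of $\pia_3(\PP^2\smash\PP^2)$ developed in this paper. The computation is carried out by smashing the cellular cofiber sequence $\PP^1\hookrightarrow\PP^2\to\T^2$ with $\PP^2$, using the determinations of $\pia_n\PP^2$ for small $n$ built up in earlier sections, the multiplicative data underlying $[\PP^2,\PP^2]_{\SH(F)}$ from the first theorem, and the Moore-space calculations of \cite{rondigs.moore}. The critical output is the identification of the image of the natural map from $\pia_3(\PP^2\smash\PP^2)$ into $\KMils_4$ as precisely the subsheaf $6\KMils_4$.

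The main obstacle is ensuring that the numerical factor comes out as $6$ on the nose, neither larger nor smaller, since this is what distinguishes Suslin's conjectured value from its known divisors. Tracking $2$-local and $3$-local information through the attaching maps requires the hypotheses $\Char F\neq 2,3$ in an essential way (already visible in the denominators appearing in the first theorem), and keeping the comparison map aligned with the Suslin-Hurewicz map itself is where most of the genuine bookkeeping lies. Once both identifications are in place, the theorem follows by unwinding definitions.
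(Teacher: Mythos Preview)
Your second step misidentifies the actual computation. You write that the critical output is ``the identification of the image of the natural map from $\pia_3(\PP^2\smash\PP^2)$ into $\KMils_4$ as precisely the subsheaf $6\KMils_4$,'' but there is no such natural map, and this is not what Lemma~\ref{lem:pi3p2smashp2} provides. That lemma instead exhibits $\KMil_4/6$ as the \emph{kernel} of the map $\pi_{3+\bideg}\PP^2\smash\PP^2 \to \pi_{2+(\star-1)}\PP^2\smash\kgl$ induced by the unit $\unit\to\kq$, generated by the class $(i\smash i)\circ\nu$. The relevant map in the proof runs \emph{into} $\pia_3\PP^2\smash\PP^2$ from $\pia_3 S^{3+(4)}\iso\KMW_4$ via $(i\smash i)\circ\nu$, and after $\PP^1$-stabilization lands in this kernel; it does not go the other way.

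More seriously, you omit the entire geometric bridge between $\PP^2\smash\PP^2$ and $\BSL_3$, and nothing in your sketch replaces it. The reduction in \cite{afw} (Theorem~2.18 there) is to show that the canonical surjection $\KMil_4/6\to\sheafS$ is an isomorphism, where $\sheafS$ is the kernel of $\pia_3\BSL_3\to\pia_3\BSL_4$. To produce a section, the paper constructs an explicit unstable map $\gmap\colon\PP^2\smash\PP^2\to\mathrm{B}^2\SL_3$ extending $\xi_3$ (Lemmas~\ref{lem:cofiber-psi3} and~\ref{lem:extend-map}), verifies that $(i\smash i)\circ\nu$ followed by $\gmap$ recovers the canonical map $S^{3+(4)}\to\BSL_3$, and then analyzes the homotopy cofiber $Z$ of $\gmap$ to show that the image of $\pia_4 Z\to\pia_3\PP^2\smash\PP^2$ intersects the image of $\pia_3 S^{3+(4)}$ trivially. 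This intersection argument is what forces the map $\pia_3 S^{3+(4)}\to\KMil_4/6$ to factor through $\sheafS$. Without $\gmap$ and the cellular control over $\SL_3$ and $Z$, there is no mechanism connecting the stable computation of $\pi_{3+\bideg}\PP^2\smash\PP^2$ to the unstable sheaf $\sheafS$, and your proposal supplies none.
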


This closes the gap between three and five in the set of
degrees for which Suslin's conjecture was
previously known. See the introduction of \cite{afw} for
more details on its history, as well as Suslin's original paper \cite{suslin.1046}.

\section{Topology}
\label{sec:topology}

Let $\CP^n$ denote complex projective space of complex dimension $n$.
This section contains rather elementary calculations in the classical
stable homotopy theory, which determine the endomorphism ring of
$\CP^2$ in the stable homotopy category.
These calculations are based on stable homotopy groups of spheres
$\pi_m\sphere$
in degree $m<6$ and the action of the topological Hopf map
$ \eta\colon S^3 \to \CP^1\iso S^2$, whose cofiber is $\CP^2$, on them.
The standard reference here is \cite{toda}. As is customary in stable
homotopy theory, the notation for a map and its (de)suspensions
coincide if the context allows it.
The purpose of this section
is not to present original results (there aren't any), but instead
to document the similarities and differences to the
situation in the motivic stable homotopy category.

Choose a basepoint for $\CP^1$, and hence $\CP^2$,
which will not appear in the notation.
The cofiber sequence
\begin{equation}\label{eq:cof-cp2}
  S^3\xrightarrow{\eta} \CP^1 \xrightarrow{i}\CP^2\xrightarrow{q} S^4
\end{equation}
induces a long exact sequence of stable homotopy groups
\[ \dotsm \xrightarrow{\eta} \pi_m\CP^1 = \pi_{m-2}\sphere \xrightarrow{i_\ast}
  \pi_m\CP^2 \xrightarrow{q_\ast} \pi_m S^4=\pi_{m-4}\sphere \xrightarrow{\eta}
  \pi_{m-1}\CP^1=\pi_{m-3}\sphere \to \dotsm \]
terminating with $\pi_2\CP^1=\pi_2\CP^2$. The induced short
exact sequences
\[ 0\to \pi_{m-2}\sphere/\eta\pi_{m-3}\sphere \to \pi_m\CP^2 \to {}_\eta\pi_{m-4}\sphere \to 0\]
express the stable homotopy group of the complex projective plane
as an extension of two groups,  the subgroup annihilated by $\eta$, and
the cokernel of
multiplication by $\eta$, on the respective stable homotopy group of spheres.
Since $\eta\colon \pi_0\sphere\iso \ZZ \to \pi_1\sphere\iso \ZZ/2\ZZ$ is surjective,
$\eta\colon \pi_1\sphere\to \pi_2\sphere$ is an isomorphism,
$\eta\colon \pi_2\sphere \to \pi_3\sphere \iso \ZZ/24$ is injective, and
$\pi_4\sphere\iso \pi_5\sphere\iso 0$, the following table results, without
any extension problem to solve.
\begin{align*}
  m  & &2 & & 3 && 4 && 5 && 6 && 7\\
  \pi_m\CP^2 & &\ZZ && 0 && 2\ZZ && \ZZ/12 && 0 && \ZZ/24 
\end{align*}
Here $2\ZZ$ denotes the abelian group of even integers under addition.
The vanishing $\pi_{3}\CP^2=0$ implies that
the cofiber sequence~(\ref{eq:cof-cp2}) induces
a short exact sequence
\begin{equation}\label{eq:endo-cp2}
  0 \to \pi_4\CP^2=[S^4,\CP^2] \xrightarrow{q^\ast}
  [\CP^2,\CP^2]\xrightarrow{i^\ast}
  [\CP^1,\CP^2]=\pi_2\CP^2\to 0
\end{equation}
of stable homotopy groups.
Hence the abelian group $[\CP^2,\CP^2]$ is an extension of
two free abelian groups, each
on one generator. Since $\pi_2\CP^2\iso \ZZ$, the short exact
sequence~(\ref{eq:endo-cp2}) splits. In order to describe the
ring structure, it helps to be more specific. A generator for
$\pi_2\CP^2$ is the inclusion $i\colon \CP^1\hookrightarrow \CP^2$.
It is the image of $\id_{\CP^2}$ under
$[\CP^2,\CP^2]\xrightarrow{i^\ast} [\CP^1,\CP^2]$.
To describe a generator for $\pi_4\CP^2$, observe that
there exists a unique map
$\omega\colon S^4\to \CP^2$ such that $q\circ \omega = 2\id_{S^4}$.
The short exact sequence~(\ref{eq:endo-cp2}) then implies that
every element $x\in [\CP^2,\CP^2]$ can uniquely be expressed
as a sum $x_1\id_{\CP^2}+x_2(\omega\circ q)$, where
$x_1,x_2\in \pi_0\mathbb{S}\iso \ZZ$.
The ring structure is then given as
\begin{align*}
  x\circ y & = \bigl(x_1\id_{\CP^2}+x_2(\omega\circ q)\bigr)\circ \bigl(y_1\id_{\CP^2}+y_2(\omega\circ q)\bigr) \\
           & = x_1y_1\id_{\CP^2} +(x_1y_2+x_2y_1)(\omega\circ q) + x_2y_2(\omega\circ q\circ \omega\circ q)\\
           & = x_1y_1\id_{\CP^2} +(x_1y_2+x_2y_1+2x_2y_2)(\omega\circ q) 
\end{align*}
and is in particular commutative. The group of units consists of the following
4 elements:
$\{\id_{\CP^2},-\id_{\CP^2},\id_{\CP^2}-\omega\circ q,-\id_{\CP^2}+\omega\circ q\}$
Of course knowledge does not stop at the dimension two. For example,
\cite{mukai.stable-complex} determines the groups $[\CP^n,\CP^n]$ for
$n\leq 7$. The complex dimension 7 is the smallest dimension where this group
contains torsion; in fact, $[\CP^7,\CP^7]\iso \ZZ^7\directsum \ZZ/2\ZZ$.
The ring structure is commutative in all these dimensions.

The table above allows to determine $[\Sigma\CP^2,\CP^2]$ as well,
which is useful, because this group contains the interesting element
$\eta\id_{\CP^2}=\eta\smash \CP^2$.
The suspension of the homotopy cofiber sequence~(\ref{eq:cof-cp2})
induces a long exact sequence on $[-,\CP^2]$.
Since $\pi_3\CP^2$ is the zero group, there results an isomorphism
$\pi_{5}\CP^2/\eta\pi_4\CP^2 \xrightarrow{\iso} [\Sigma\CP^2,\CP^2].$
As $\pi_5\CP^2$ is cyclic, generated by $i\circ \nu$, the abelian group
$[\Sigma\CP^2,\CP^2]$ is generated by the map
\[ \Sigma\CP^2\xrightarrow{q} S^5 \xrightarrow{\nu} S^2 = \CP^1 \xrightarrow{i} \CP^2.\]
Its order can be determined by identifying $\omega\circ \eta\in \pi_5\CP^2$,
which is $\pm 6(i\circ \nu)$, as the Toda bracket
$\langle \eta,2,\eta\rangle = \{6\nu,-6\nu\}$ shows, together with
\cite[Prop.~1.8]{toda} applied to
$\alpha=\gamma=\eta$ and $\beta=2$, using $\omega=\widetilde{\beta}$.
(See Appendix~\ref{sec:toda-brackets} for a definition and some properties of Toda brackets, and in particular Proposition~\ref{prop:toda-brackets-coext}
for a restatement of \cite[Prop.~1.8]{toda}.)
Hence
$[\Sigma\CP^2,\CP^2]$ is cyclic of order $6$. The element $\eta\id_{\CP^2}$
turns out to be the unique nonzero element of order 2, as the Toda
bracket $\langle \eta,2=q\circ \omega,\eta\rangle$ also implies. The properties
of Toda brackets supply an inclusion
\[ \langle \eta,q,\eta\smash \CP^2\rangle\circ \omega \subset
\langle \eta,q,\eta\smash \CP^2\circ \omega =\omega\circ \eta \rangle
\subset \langle \eta,q\circ \omega,\eta\rangle = \{6\nu,-6\nu\} \]
which shows that it does not contain the zero element.
More precisely, since the
composition
\[ \pi_3\sphere \xrightarrow{q^\ast} [ \Sigma^2\CP^2,S^3 ]
\xrightarrow{\omega^\ast} \pi_3\sphere \]
is multiplication with $2$ on a cyclic group with 24 elements, and
the homomorphism $q^\ast\colon \pi_3\sphere \to [\Sigma^2\CP^2,S^3]$ is the
projection onto a cyclic group with 12 elements (as one deduces from
the action of $\eta$ on $\pi_n\sphere$ for $n\in \{1,2\}$), the homomorphism
$\omega^\ast\colon [\Sigma^2\CP^2,S^3]\to \pi_3\sphere$ is injective.
One obtains
$\langle \eta,q,\eta\smash \CP^2\rangle \subset \{3\nu\circ q,-3\nu\circ q\}$.
Hence the identity $\id_{\CP^2}\in [\CP^2,\CP^2]$ satisfies
\begin{equation}\label{eq:etacp2}
  \eta\id_{\CP^2}=\eta\smash \CP^2 = \pm 3 (i \circ \nu\circ q).
\end{equation}

For comparison purposes with the motivic situation,
it is instructive to look at the real case as well.
Let $\RP^n$ denote real projective space.
The cofiber
sequence\footnote{It
  would be better to use $-2$ and not $2$, since the real realization of the algebraic Hopf map is
  negative.}
\begin{equation}\label{eq:cof-rp2}
  S^1\xrightarrow{2} \RP^1 \xrightarrow{i}\RP^2\xrightarrow{q} S^2
\end{equation}
induces a long exact sequence of stable homotopy groups
\[ \dotsm \xrightarrow{2} \pi_m\RP^1 = \pi_{m-1}\sphere \xrightarrow{i_\ast}
  \pi_m\RP^2 \xrightarrow{q_\ast} \pi_m S^2=\pi_{m-2}\sphere \xrightarrow{2}
  \pi_{m-1}\RP^1=\pi_{m-2}\sphere \to \dotsm \]
terminating with $\pi_1\RP^1\to \pi_1\RP^2$. The induced short
exact sequences
\[ 0\to \pi_{m-1}\sphere/2\pi_{m-1}\sphere \to \pi_m\RP^2 \to {}_2\pi_{m-2}\sphere \to 0\]
express the stable homotopy group of the real projective plane
as an extension of two groups,  the $2$-torsion subgroup, and
the cokernel of
multiplication by $2$, on the respective stable homotopy group of spheres.
The groups $\pi_3\RP^2$ and $\pi_4\RP^2$ are both extensions of
$\ZZ/2$ by $\ZZ/2$. The Toda bracket $\langle 2, \eta, 2\rangle = \{\eta^2\}$
implies that $\pi_3\RP^2$ is given by the nontrivial extension,
the extension for $\pi_4\RP^2$ turns out to be trivial \cite[Lemma 5.2]{wu.p2}.
The following table results.
\begin{align*}
  m  & &1 & & 2 & & 3 && 4 && 5 && 6 \\
  \pi_m\RP^2 & &\ZZ/2\ZZ && \ZZ/2\ZZ && \ZZ/4\ZZ && \ZZ/2\ZZ\times\ZZ/2\ZZ && \ZZ/2\ZZ && 0 
\end{align*}
The portion for $m<3$ of this table implies that
the cofiber sequence~(\ref{eq:cof-rp2}) induces
a short exact sequence
\begin{equation}\label{eq:endo-rp2}
  0 \to \pi_2\RP^2=[S^2,\RP^2] \xrightarrow{q^\ast}
  [\RP^2,\RP^2]\xrightarrow{i^\ast}
  [\RP^1,\RP^2]=\pi_1\RP^2\to 0
\end{equation}
and hence the abelian group $[\RP^2,\RP^2]$ is an extension of
two groups of order two. As in the computation of $\pi_3\RP^2$,
the extension is nontrivial, meaning that $\id_{\RP^2}\in [\RP^2,\RP^2]$
is an element of order 4, as already proven in \cite{barratt.track2}.
There cannot be any doubt whatsoever regarding the ring structure of
$[\RP^2,\RP^2]$.

\section{Over a field}
\label{sec:over-field}

Let $F$ be a field, and let $\SH(F)$ denote the motivic stable homotopy
category of $F$ \cite{voevodsky.icm}. For a motivic spectrum $\E\in \SH(F)$
and integers $s,w\in \ZZ$,
let $\pi_{s,w} \E$ denote the abelian group 
$[\Sigma^{s,w}\unit,\E]$, where $\E$ is a motivic spectrum and
$\unit_F=\unit$ is the motivic sphere spectrum. The grading conventions
are such that the suspension functor $\Sigma^{2,1}=\Sigma^{1+(1)}$
is suspension with $\PP^1$, and $\Sigma^{1,0}=\Sigma^{1+(0)}=\Sigma^1=\Sigma$
is suspension with the simplicial circle. Set $\pi_{s+(w)}\E:=\pi_{s+w,w}\E$, 
and let
\[ \pi_{s+\bideg}\E = \bigoplus_{w\in \ZZ} \pi_{s+w,w}\E = \bigoplus_{w\in \ZZ} \pi_{s+(w)}\E \]
denote the direct sum, considered as a $\ZZ$-graded module over the
$\ZZ$-graded ring $\pi_{0+\bideg}\unit$. The notation
$\pi_{s-\bideg}\E:=\pi_{s+(-\star)}\E$ will be used frequently.
The strictly 
$\A^1$-invariant sheaf obtained as the associated Nisnevich sheaf
of $U\mapsto [\Sigma^{s,w}U_+,\E]$ for $U\in \Sm_F$ is denoted
$\underline{\pi}_{s,w}\E$, which gives rise to
the homotopy module $\underline{\pi}_{s+\bideg}\E$.
In the following, every occurrence of ``$\pi$'' can be replaced
by ``$\underline{\pi}$'' without affecting the truth of the (suitably
reinterpreted) statements.
See \cite{morel.zeroline} for the following fundamental result.

\begin{theorem}[Morel]\label{thm:zeroline}
  Let $F$ be a field. Then $\pi_{0-\bideg}\unit$ is the Milnor-Witt
  $K$-theory of $F$.
\end{theorem}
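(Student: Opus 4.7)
The plan is to realize this identification as the stabilization of Morel's unstable computation of $\pia_0(\G^{\wedge n})$. First, I would construct a candidate graded ring map
\[ \Phi \colon \KMW_{*}(F) \longrightarrow \pi_{0-\bideg}\unit \]
out of the standard presentation of Milnor-Witt $K$-theory by symbols $[u]$ for $u \in F^\units$ in degree $+1$ and the Hopf element $\eta$ in degree $-1$. Send $[u]$ to the stable class of the pointed map $\Spec F_+ \to \G$ classifying $u$, an element of $\pi_{-1,-1}\unit$, which is the degree-$(+1)$ summand of $\pi_{0-\bideg}\unit$; send $\eta$ to the stabilization of the algebraic Hopf map $\A^2 \minus \{0\} \to \PP^1$, an element of $\pi_{1,1}\unit$, the degree-$(-1)$ summand.

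Second, I would verify the defining relations in $\pi_{*,*}\unit$. The Steinberg relation $[u][1-u]=0$ holds because the pointed map $\Spec F_+ \to \G \wedge \G$ with coordinates $(u,1-u)$ factors through the $\A^1$-contractible $\A^1 \minus \{0,1\}$. Twisted multiplicativity $[uv] = [u]+[v]+\eta [u][v]$ amounts to the identity $\langle u \rangle = 1 + \eta [u]$ relating the symbol $[u]$ to the rank-one form $\langle u \rangle$. Centrality of $\eta$ follows from graded commutativity of $\pi_{*,*}\unit$ combined with the identification of the symmetry on $\G \wedge \G$ with $-\langle -1 \rangle = -1-\eta[-1]$. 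Finally, $\eta(2+\eta[-1])=0$ reflects the $\A^1$-nullhomotopy witnessing the vanishing of $\eta$ against the hyperbolic form.

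Third, I would prove $\Phi$ is an isomorphism. Surjectivity follows from Morel's $\A^1$-connectivity theorem: since $\unit$ is $(-1)$-connected, $\pi_{0-\bideg}\unit$ sits at the bottom of the Postnikov tower, and sections over function fields are forced to be $\ZZ$-linear combinations of products of symbols and powers of $\eta$. The main obstacle is injectivity. My strategy would be to reduce, via the connectivity theorem applied to the $\PP^1$-suspension tower, to Morel's unstable identification $\pia_0(\G^{\wedge n}) \iso \underline{\KMW}_n$; the $\PP^1$-suspension maps on $\pia_0$ stabilize at the zero line, so that stable classes coincide with their unstable representatives. Establishing this unstable identification is the technical heart and the hardest step: it requires the full structural theory of strictly $\A^1$-invariant sheaves, in particular the contraction functor $M \mapsto M_{-1}$, the identity $(\underline{\KMW}_n)_{-1} = \underline{\KMW}_{n-1}$, and the construction of a Rost-Schmid-style Gersten flasque resolution of $\underline{\KMW}_n$ matching the Gersten resolution of $\pia_0(\G^{\wedge n})$. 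Producing this Rost-Schmid complex and proving its exactness is where all the real labor lies.
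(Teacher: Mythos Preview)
The paper does not prove this statement. Theorem~\ref{thm:zeroline} is attributed to Morel and is simply cited from \cite{morel.zeroline}; the line preceding it reads ``See \cite{morel.zeroline} for the following fundamental result,'' and no argument is given. So there is no proof in the paper to compare your proposal against. Your sketch is an attempt to outline Morel's original argument rather than anything the present paper contributes.

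As a side remark on the sketch itself: your justification of the Steinberg relation is not correct. The variety $\A^1\minus\{0,1\}$ is not $\A^1$-contractible (already $\G=\A^1\minus\{0\}$ is not), so the map $t\mapsto (t,1-t)$ does not become null for that reason. The actual argument produces an explicit $\A^1$-nullhomotopy of the composite $\A^1\minus\{0,1\}\to \G\times\G\to \G\smash\G$; see for instance the treatment in Morel's book \cite{morel.at} or the earlier Hu--Kriz argument. The remainder of your outline---constructing the map on generators, reducing to the unstable identification $\pia_0(\G^{\smash n})\iso \underline{\mathbf{K}}^{\MilWitt}_n$, and invoking the Rost--Schmid/Gersten machinery for strictly $\A^1$-invariant sheaves---is a fair high-level summary of Morel's strategy, with the hard work correctly located in the unstable computation and the contraction formalism.
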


The Milnor-Witt $K$-theory of $F$ is denoted $\KMW(F)$, or
simply $\KMW$, following the convention that the base field
or scheme may be ignored in the notation. The definition and
some details regarding
$\KMW$ and modules over it are contained in the
Appendix~\ref{sec:modules-over-milnor}.
Theorem~\ref{thm:zeroline} implies that for every motivic spectrum
$\E$ and for every integer $s$, $\pi_{s+\bideg}\E$ has a canonical
structure as a graded
$\KMW$-module. The conventions dictate that this structure comes with
a sign change in the sense that
elements in $\KMW_d$ provide homomorphisms
$\pi_{s+(w)}\E\to \pi_{s+(w-d)}\E$ for every $d,s,w\in \ZZ$.

Choose a basepoint for $\PP^1$, and hence $\PP^2$,
which will not appear in the notation. Neither will the base field $F$ most
of the time.
The main cofiber sequence over a field is 
\begin{equation}\label{eq:cof-p2}
  S^{1+(2)}\xrightarrow{\eta} \PP^1 \xrightarrow{i}\PP^2\xrightarrow{q} S^{2+(2)}.
\end{equation}
It induces a long exact sequence of $\KMW$-modules
\[ \dotsm \xrightarrow{\eta} \pi_{m+\bideg}\PP^1 
  \xrightarrow{i_\ast}
  \pi_{m+\bideg}\PP^2 \xrightarrow{q_\ast} \pi_{m+\bideg} S^{2+(2)}
  \xrightarrow{\eta}
  \pi_{m-1+\bideg}\PP^1
  \to \dotsm \]
terminating with $\pi_{1+\bideg}\PP^2$ by connectivity \cite{morel.connectivity}.
The induced short exact sequences
\begin{equation}\label{eq:ses-p2}
  0\to \pi_{m-1+(\star-1)}\unit/\eta\pi_{m-1+(\star-2)}\unit \to \pi_{m+\bideg}\PP^2 \to {}_\eta\pi_{m-2+(\star-2)}\unit \to 0
\end{equation}
express $\pi_{m+\bideg}\PP^2$ 
as an extension of two $\KMW$-modules, the submodule of $\pi_{m-2+(\star-2)}\unit$
annihilated by $\eta$,
and the cokernel of
multiplication by $\eta$ on $\pi_{m-1+(\star-1)}\unit$.
This justifies the relevance
of the following statement. 

\begin{theorem}[Gille-Scully-Zhong]\label{thm:ker-eta-pi0}
  Let $F$ be a field of characteristic not two.
  The submodule of $\KMW$ annihilated by $\eta$ coincides with the image of 
  multiplication by the hyperbolic plane on $\KMW$:
  \[ {}_\eta\KMW = \hyper\KMW \]
\end{theorem}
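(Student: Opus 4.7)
My plan is to use Morel's Cartesian square presentation of Milnor-Witt $K$-theory. The inclusion $\hyper\KMW\subseteq {}_\eta\KMW$ is immediate from the defining relation $\eta\hyper=0$ in $\KMW$, so all the work lies in the reverse inclusion.

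First, combining Morel's presentation of $\KMW$ with the Orlov-Vishik-Voevodsky theorem (which resolves the Milnor conjecture on quadratic forms and uses $\Char F\neq 2$), there is for every integer $n$ a natural isomorphism
\[ \KMW_n \iso \KMil_n \times_{\kmil_n} \fundid^n, \]
where $\fundid^n$ is the $n$-th power of the fundamental ideal of the Witt ring $\Witt(F)$ (with the convention $\fundid^n=\Witt(F)$ for $n\leq 0$), $\kmil_n$ denotes mod-$2$ Milnor $K$-theory, the map $\KMil_n\to\kmil_n$ is reduction modulo $2$, and $\fundid^n\to\kmil_n$ is the composite $\fundid^n\twoheadrightarrow\fundid^n/\fundid^{n+1}\iso\kmil_n$ of the quotient with the Milnor-conjecture isomorphism.

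Second, I would translate the two operations in the theorem into this language. A direct check on generators shows that multiplication by $\eta\colon\KMW_n\to\KMW_{n-1}$ corresponds to $(a,b)\mapsto (0,b)$, the second coordinate being reinterpreted inside $\fundid^{n-1}$ via the inclusion $\fundid^n\hookrightarrow\fundid^{n-1}$; meanwhile the hyperbolic plane $\hyper\in\KMW_0$ corresponds to $(2,0)$, since hyperbolic forms vanish in $\Witt(F)$. The first identification can be verified on the Pfister generators $[u_1]\dotsm [u_n]\mapsto(\{u_1,\dotsc,u_n\},\langle\langle u_1,\dotsc,u_n\rangle\rangle)$ using the relation $[uv]=[u]+[v]+\eta[u][v]$.

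Third, both sides of the claimed equality can then be computed directly. The kernel ${}_\eta\KMW_n$ consists of pairs $(a,b)$ with $b=0$ inside $\fundid^{n-1}$; injectivity of $\fundid^n\hookrightarrow\fundid^{n-1}$ forces $b=0$ already in $\fundid^n$, and pullback compatibility in the Cartesian square then forces $a\in 2\KMil_n$. On the other hand, $\hyper\cdot(a,b)=(2a,0)$, and since $\fundid^n\twoheadrightarrow\kmil_n$ is surjective the projection $\KMW_n\twoheadrightarrow\KMil_n$ is surjective, so every pair $(2c,0)$ with $c\in\KMil_n$ lies in $\hyper\KMW_n$. Thus both subgroups equal $\{(2c,0):c\in\KMil_n\}$.

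The main obstacle is essentially bookkeeping: correctly identifying $\eta$ and $\hyper$ under the Cartesian-square isomorphism, and carefully distinguishing $\fundid^n$ from $\fundid^{n-1}$, since the map $\fundid^n\hookrightarrow\fundid^{n-1}$ entering the description of $\eta$ is an honest inclusion rather than a reduction. Once that dictionary is in place the kernel-versus-image comparison is almost immediate, consistent with the author's remark that simpler proofs exist.
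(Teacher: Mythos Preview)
Your proof is correct and takes a genuinely different route from the paper's. The paper argues through the very effective hermitian $K$-theory spectrum $\kq$: it invokes the Wood cofiber sequence $\Sigma^{(1)}\kq\xrightarrow{\eta}\kq\to\kgl\xrightarrow{\mathrm{hyper}}\Sigma^{1+(1)}\kq$ together with the identification $\pi_{0+\bideg}\kq\iso\KMW$, and then uses the slice filtration of $\kgl$ to identify the image of the hyperbolic map on $\pi_{1+\bideg}\kgl$ as the $\KMW$-submodule generated by $\hyper$. Your argument instead stays entirely within quadratic form theory, using Morel's pullback square $\KMW_n\iso\KMil_n\times_{\kmil_n}\fundid^n$ (valid via the Orlov--Vishik--Voevodsky theorem, which is where $\Char F\neq 2$ enters) and the elementary identifications $\eta\leftrightarrow(0,1)$, $\hyper\leftrightarrow(2,0)$ to read off both sides as $2\KMil_n\times\{0\}$.

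Each approach has its virtues. Yours is shorter and more self-contained, and indeed is presumably among the ``simpler proofs'' the author alludes to just before the theorem. The paper's argument, while heavier, is internal to the motivic stable homotopy category and foreshadows the role of $\kq$ and $\kgl$ in the subsequent computations (Theorem~\ref{thm:pi1modeta}, Lemma~\ref{lem:pi3p2smashp2}), so it fits the narrative arc of the paper. Both rely on deep input---yours on the Milnor conjecture for quadratic forms, the paper's on structural results about $\kq$---but once that input is granted, your version is the more transparent of the two.
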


\begin{proof}
  This follows from the injectivity of the
  homomorphism $\epsilon_\star^R$ in \cite[Theorem 5.4]{gsz}
  for $R$ a field.
\end{proof}

Theorem~\ref{thm:ker-eta-pi0}
provides the exactness of the sequence mentioned in
\cite[Remark 4.3]{choudhury-hogadi}. It is quite special that the
kernel of multiplication by $\eta$ on $\KMW$ is generated by
a single element, but then the element $\eta$ is also quite special.
As a consequence of Theorem~\ref{thm:ker-eta-pi0}, the
short exact sequence
\[ 0\to \pi_{1+(\star-1)}\unit/\eta\pi_{1+(\star-2)}\unit \to \pi_{2+\bideg}\PP^2 \to {}_\eta\pi_{0+(\star-2)}\unit \to 0\]
specializes to an isomorphism
$\pi_{1+(w-1)}\unit/\eta\pi_{1+(w-2)}\unit\iso \pi_{2+(w)}\PP^2$
for $w>2$. Therefore knowing $\pi_{1+(\star-1)}\unit/\eta\pi_{1+(\star-2)}\unit$
is essential. 

\begin{theorem}\label{thm:pi1modeta}
  Let $F$ be a field of characteristic not two or three.
  The unit map $\unit\to \kq$ induces a surjection
  $\pi_{1+\bideg}\unit/\eta\pi_{1+(\star-1)}\unit\to \pi_{1+\bideg}\kq/\eta\pi_{1+(\star-1)}\kq$
  whose kernel is $\KMil_{2-\star}/12$ (generated in $\star=2$)
  after inverting the exponential characteristic $e$ of $F$.
  In particular, the vanishing $\pi_{1+(w)}\kq/\eta\pi_{1+(w-1)}\kq$ for $w>1$
  implies that the nontrivial group of highest weight
  is $\pi_{1+(2)}\unit[e^{-1}]/\eta\pi_{1+(1)}\unit[e^{-1}]\iso \ZZ/12$,
  generated by the image of the Hopf map $\nu$.
\end{theorem}

\begin{proof}
  This is a consequence of \cite[Theorem 5.5]{rso.oneline} in the formulation
  given in \cite[Theorem 2.5]{rondigs.moore}; see
  also \cite[Theorem 1.1]{rso.twoline}.
  First of all, the unit map
  $\pi_{1+\bideg}\unit\to \pi_{1+\bideg}\kq$ is surjective, whence the same
  is true for the induced map on the quotients. Set $e$ to be the exponential
  characteristic of $F$.
  Consider the following natural transformation
  \begin{center}
    \begin{tikzcd}
      0 \ar[r]  & \KMil_{2-\star}/24[\tfrac{1}{e}] \ar[r] \ar[d] &
      \pi_{1+\bideg}\unit[\tfrac{1}{e}]   \ar[r] \ar[d] &
      \pi_{1+\bideg}\kq[\tfrac{1}{e}] \ar[r] \ar[d] & 0 \\
      0 \ar[r]  & A \ar[r]  & \pi_{1+\bideg}\unit[\tfrac{1}{e}]/\eta
      \ar[r]  &  \pi_{1+\bideg}\kq[\tfrac{1}{e}]/\eta \ar[r] & 0
    \end{tikzcd}
  \end{center}
  of short exact sequences. 
  The snake lemma implies that 
  the map $\KMil/24[\tfrac{1}{e}]\to A$ is surjective,
  because the map $\eta\pi_{1+\bideg}\unit \to \eta\pi_{1+\bideg}\kq$
  on the kernels is surjective. The map $\eta\colon \Sigma^{(1)}\unit\to \unit$
  factors by construction as $\eta\colon \Sigma^{(1)}\unit\to \f_1\unit\to \unit$,
  where $\f_1\unit \to \unit$ denotes the first effective cover. The presentations
  given in \cite[Lemma 2.3, Theorem 2.5]{rondigs.moore} then imply that
  the kernel of the map $\eta\pi_{1+\bideg}\unit \to \eta\pi_{1+\bideg}\kq$
  is generated by $12\nu=\eta^2\eta_\Top$, where $\eta_\Top\in \pi_{1+(0)}\unit$
  denotes the topological Hopf map.\footnote{This element appeared in Section~\ref{sec:topology} without the subscript.}
  Hence the snake lemma also implies that $A\iso \KMil/12[\tfrac{1}{e}]$.
\end{proof}

The same proof shows that Theorem~\ref{thm:pi1modeta} is valid in
characteristic 3 as well in the sense
that the kernel of
\[ \pi_{1+\bideg}\unit/\eta\pi_{1+(\star-1)}\unit\to \pi_{1+\bideg}\kq/\eta\pi_{1+(\star -1)}\kq\]
is isomorphic to $\KMil/4[\tfrac{1}{3}]$ after inverting $3$.
Theorems~\ref{thm:ker-eta-pi0} and~\ref{thm:pi1modeta} provide
sufficient information about the outer terms in the short exact sequence
\begin{equation}\label{eq:pi2p2}
  0 \to \pi_{1+(\star-1)}\unit/\eta\pi_{1+(\star-2)}\unit\to \pi_{2+\bideg}\PP^2
  \to {}_{\eta}\pi_{0+(\star-2)}\unit \to 0
\end{equation}
of $\KMW$-modules. Actually the outer terms are $\KMil$-modules in
a natural way; $\eta$ acts trivially on these. However, $\eta$
acts nontrivially on the middle term. The reason is the
Toda bracket $\langle \eta,\hyper,\eta\rangle = \{6\nu,-6\nu\}$
from \cite[Proposition 4.1]{rondigs.moore}. 

\begin{lemma}\label{lem:ext-pi1p2}
  Let $F$ be a field of characteristic neither $2$ nor $3$.
  The action of $\eta$ on the $\KMW$-module $\pi_{2+\bideg}\PP^2$
  in the extension
  \[0 \to \pi_{1+(\star-1)}\unit/\eta\pi_{1+(\star-2)}\unit\to \pi_{2+\bideg}\PP^2
  \to {}_{\eta}\pi_{0+(\star-2)}\unit  \to 0\]
  is determined by the fact that $\liftofhyper\circ \eta=6 (i\circ \nu)$,
  where $\liftofhyper\in \pi_{2+(2)}\PP^2$ is any lift of
  $\hyper\in \pi_{0+(0)}\unit$.
\end{lemma}

\begin{proof}
  The Toda bracket $\langle \eta,\hyper,\eta\rangle = \{6\nu,-6\nu\}$
  from \cite[Proposition 4.1]{rondigs.moore} implies by 
  Proposition~\ref{prop:toda-brackets-coext}
  that there exists an element $\liftofhyper\in \pi_{2+(2)}\PP^2$ which on the
  one hand maps to $\hyper\in {}_{\eta}\pi_{0+(0)}\unit$,
  and on the other hand is such
  that $\liftofhyper\circ \eta$ is the image of $6\nu\in \pi_{1+(2)}\unit$.
  Inspecting the
  short exact sequence~(\ref{eq:pi2p2}) in weight $3$ gives
  an isomorphism $\pi_{1+(2)}\unit/\eta\pi_{1+(1)}\unit \iso \pi_{2+(3)}\PP^2$
  by Theorem~\ref{thm:ker-eta-pi0}.
  Hence $\pi_{2+(3)}\PP^2$ is cyclic of order 12 by Theorem~\ref{thm:pi1modeta},
  with the image $i\circ \nu$ of
  $\nu$ as a generator. It follows that $\liftofhyper\circ \eta$ is the
  unique nonzero element
  of order two in this group, and this is true for
  any choice of $\liftofhyper$ lifting $\hyper$.
  Inspecting the
  short exact sequence~(\ref{eq:pi2p2}) in weight $2$ provides
  \[ 0 \to \pi_{1+(1)}\unit/\eta\pi_{1+(0)}\unit \to \pi_{2+(2)}\PP^2
  \to {}_{\eta}\pi_{0+(0)}\unit \to 0.\]
  that for any two lifts $\liftofhyper,\hyper^{\prime\prime}$ of $\hyper$,
  there exists $\{u\}\in \KMil_1/12$
  with $\liftofhyper-\hyper^{\prime\prime}=i\{u\}\nu$.

  In order to describe the extension group more precisely,
  set $A_\star:= \pi_{1-(\star-1)}\unit/\eta\pi_{1-(\star)}\unit$. The
  extension~(\ref{eq:pi2p2}) is given by an element in
  \begin{equation*}
    \Ext^1_{\KMW}({}_{\eta}\pi_{0-(\star+2)}\unit,A_{\star+2})  \iso
    \Ext^1_{\KMW}(2\KMil_{\star+2},A_{\star+2})  \iso
    \Ext^1_{\KMW}(2\KMil,A)
  \end{equation*}
  by Theorem~\ref{thm:ker-eta-pi0}.
  The short exact sequence
  \[ 0\to {}_{2}\KMil\to \KMil \to 2\KMil \to 0 \]
  of $\KMW$-modules induces a long exact sequence
  \[ 0\to \Hom(2\KMil,A)\to \Hom(\KMil,A) \to \Hom({}_{2}\KMil,A) \to
  \Ext^1(2\KMil,A)\to \Ext^1(\KMil,A) \to \dotsm \]
  where the subscript ``$\KMW$'' is suppressed.
  Lemma~\ref{lem:extkm} applies to provide an isomorphism
  between the group
  $\Ext^1_{\KMW}(\KMil,A=\pi_{1-(\star -1)}\unit/\eta\pi_{1-(\star)}\unit)$
  and 
  \[ {}_{\hyper}(\pi_{1+(2)}\unit/\eta\pi_{1+(1)}\unit) \iso {}_{2}(\pi_{1+(2)}\unit/\eta\pi_{1+(1)}\unit) \iso {}_{2}\ZZ/12 \]
  where the last isomorphism follows from Theorem~\ref{thm:pi1modeta}.
  Note that multiplication with $\eta$ on the $\KMW$-module
  $\pi_{1+\bideg}\unit/\eta\pi_{1+\bideg}\unit$ is the zero homomorphism by
  construction, which simplifies the term appearing in Lemma~\ref{lem:extkm}.
  Hence $\Ext^1_{\KMW}(\KMil,\pi_{1-(\star -1)}\unit/\eta\pi_{1-(\star)}\unit)
  =\{0,6\nu\}$
  does not depend on the base field $F$. The homomorphism
  $\Ext^1(2\KMil,A)\to \Ext^1(\KMil,A)$ is surjective,
  because the extension in question corresponds to the unique
  nonzero element in $\A_{-1}=\pi_{1+(2)}\unit/\eta\pi_{1+(1)}\unit$ of order two
  by the Toda bracket $\langle \eta,\hyper,\eta\rangle = \{6\nu,-6\nu\}$
  from \cite[Proposition 4.1]{rondigs.moore}.
  There results an exact sequence
  \begin{equation}\label{eq:ext-2KMil}
    \Hom_{\KMW}(\KMil,A)\to \Hom_{\KMW}({}_{2}\KMil,A)\to \Ext_{\KMW}(2\KMil,A)\to \Ext_{\KMW}(\KMil,A)\to 0
  \end{equation}
  where $\Hom({}_{2}\KMil,A)\subset \Hom(\KMil(-1),A)=A_1$ via the
  surjection $\KMil(-1)\to {}_{2}\KMil$ obtained by multiplying with
  $\{-1\}\in \KMil_1$, see Theorem~\ref{thm:ker-2-kmil}.
  Since $\Hom_{\KMW}(\KMil,A)=\Hom_{\KMil}(\KMil,A)$ and
  $\Hom_{\KMW}({}_{2}\KMil,A)=\Hom_{\KMil}({}_{2}\KMil,A)$, the exact sequence~(\ref{eq:ext-2KMil})
  induces a short exact sequence
  \[ 0 \to \Ext^1_{\KMil}(2\KMil,A)\to \Ext^1_{\KMW}(2\KMil,A)\to \Ext_{\KMW}(\KMil,A)\to 0\]
  in which $\Ext^1_{\KMil}(2\KMil,A)\iso \Hom_{\KMil}({}_{2}\KMil,A)/\rho A_0$.
  In particular, the sought-after
  element in $\Ext^1_{\KMW}(2\KMil,A)$ classifying
  the extension in question is determined by the relation
  $\liftofhyper\circ \eta = 6(i \circ \nu)$ and an element in the group
  $\Ext^1_{\KMil}(2\KMil,A)$ depending solely on the $\KMil$-module
  structures of $2\KMil$ and $A$.
\end{proof}

\begin{remark}\label{rem:unstable}
  Regarding the unstable situation, the $\A^1$-fiber sequence
  \[ \A^1\minus \{0\}\to \A^3\minus \{0\} \to \PP^2 \]
  and the $\A^1$-discreteness of $\A^1\minus \{0\}$ provide an identification
  $\underline{\pi}_2^{\A^1}\PP^2\iso \underline{\pi}^{\A^1}_2(\A^3\minus \{0\})
  \iso \underline{\mathbf{K}}^{\MilWitt}_3$
  of (unstable) $\A^1$-homotopy sheaves, 
  by \cite[Theorem 1.23]{morel.at}. Let $\underline{\pi}_{2+(3)}^{\A^1}\PP^2$
  denote the threefold contraction of $\pia_2\PP^2$, which coincides
  with the Nisnevich sheaf associated with the presheaf $X\mapsto \Hom_{\mathbf{H}^{\A^1}_\bullet(X)}(\A_X^3\minus \{0\},\PP^2_X)$ \cite[p.~72, Theorem 6.13]{morel.at}.
  The generator of
  $\underline{\pi}_{2+(3)}^{\A^1}\PP^2 \iso \underline{\mathbf{K}}^{\MilWitt}_0$
  is thus
  the class of the canonical map $\A^3\minus \{0\}\to \PP^2$.
  Stabilization with respect to $\PP^1$ provides a homomorphism
  \[ \underline{\pi}_{2+(3)}^{\A^1}\PP^2 \to \underline{\pi}_{2+(3)} \PP^2 \]
  to the stable homotopy sheaf computed in Lemma~\ref{lem:ext-pi1p2}.
  It sends the generator to $m(i\circ \nu)$, where $m$ is an integer
  unique up to multiples of 12, and $i\circ \nu$ is the generator of the
  target. A comparison with the classical topological situation via
  complex or \'etale realization, which is possible since the target
  does not depend on the base field, shows that $m=\pm 2$,
  because the order of the canonical map $S^5\to \CP^2$ is 6 after
  one suspension \cite[Theorem 1.2]{mukai.transfer}.
  Note that \'etale realization sends $\PP^n$ to the
  profinite completion of its complex realization
  $\CP^n$ by \cite[Theorem 12.9]{artin-mazur};
  see also \cite[Theorem 8.4]{friedlander}. The same applies to
  the maps involved here.
\end{remark}

The computations provided by $\pi_{1-(\star-1)}\PP^2\iso \KMil$ and
Lemma~\ref{lem:ext-pi1p2} suffice to conclude the
following statement.

\begin{theorem}\label{thm:endo-p2}
  Let $F$ be a field of characteristic not in $\{2,3\}$. The
  cofiber sequence~(\ref{eq:cof-p2}) induces a
  short exact sequence
  \begin{equation}\label{eq:endo-p2}
    0 \to \pi_{2+(\star+2)}\PP^2/\eta\pi_{2+(\star+1)}\PP^2 \to
    [\Sigma^{\bideg}\PP^2,\PP^2]\to
    \pi_{1+(\star+1)}\PP^2 \to 0
  \end{equation}
  of $\KMW$-modules.
  In particular, after inverting the exponential characteristic,
  there is an isomorphism
  \[ \ZZ/6\iso \pi_{2+(3)}\PP^2/\eta\pi_{2+(2)}\PP^2 \iso [\Sigma^{(1)}\PP^2,\PP^2]\]
  with $i\circ \nu \circ q$ as a generator, and an isomorphism
  \[ \ZZ\directsum\ZZ\directsum \KMil_1/6(F) \iso [\PP^2,\PP^2]\]
  of abelian groups, with $\id_{\PP^2}$ and $\liftofhyper\circ q$ each
  generating one free summand.
  The equality
  $\eta\cdot\id_{\PP^2}=\eta\smash\id{_{\PP^2}}=3(i\circ\nu\circ q)$ determines the
  action of $\eta$ on $[\Sigma^{\bideg}\PP^2,\PP^2]$.
\end{theorem}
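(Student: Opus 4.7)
The plan is to apply the contravariant functor $[-,\PP^2]$ to the cofiber sequence~(\ref{eq:cof-p2}) and analyze the induced long exact sequence of graded $\KMW$-modules. First, I would establish the short exact sequence~(\ref{eq:endo-p2}) by noting that the connecting homomorphism of the long exact sequence is multiplication by $\eta$ on $\pi_{1+\bideg}\PP^2$. Because $\pi_{1+\bideg}\PP^2\iso\KMil$ and Milnor $K$-theory is precisely the $\eta$-quotient of Milnor-Witt $K$-theory, this connecting map is zero and the long exact sequence breaks up as claimed.

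Second, I would evaluate the SES at the weight relevant for $[\Sigma^{(1)}\PP^2,\PP^2]$. The right-hand term vanishes here (Milnor $K$-theory is concentrated in nonnegative degrees), so $q^\ast$ identifies $\pi_{2+(3)}\PP^2/\eta\pi_{2+(2)}\PP^2$ with the whole group. The cyclic group $\pi_{2+(3)}\PP^2\iso\ZZ/12$ with generator $i\circ\nu$ has already been recorded, and by Lemma~\ref{lem:ext-pi1p2} the lift $g$ of $\hyper$ satisfies $g\eta = 6(i\circ\nu)$. The complementary submodule of $\pi_{2+(2)}\PP^2$, arising from $\pi_{1+(1)}\unit/\eta\pi_{1+(0)}\unit$ via $i_\ast$, is annihilated by $\eta$, since $\eta\cdot i_\ast(\alpha) = i_\ast(\eta\alpha)$ vanishes in $\pi_{1+(2)}\unit/\eta\pi_{1+(1)}\unit$. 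Hence $\eta\pi_{2+(2)}\PP^2 = \ZZ\cdot 6(i\circ\nu)$ and the quotient is $\ZZ/6$, generated by $i\circ\nu\circ q$.

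Third, I would treat $[\PP^2,\PP^2]$ analogously. The target $\pi_{1+(1)}\PP^2\iso\ZZ$ is split by $\id_{\PP^2}\mapsto i$, contributing one free summand generated by $\id_{\PP^2}$. The kernel $\pi_{2+(2)}\PP^2/\eta\pi_{2+(1)}\PP^2$ is computed by combining Lemma~\ref{lem:ext-pi1p2} with Theorem~\ref{thm:pi1modeta}: the hyperbolic lift $g$ contributes a second $\ZZ$ summand, and after inverting the exponential characteristic $e$ the Milnor part reduces from $\KMil_1/12$ to $\KMil_1/6(F)$ once the $\eta$-image from $\pi_{2+(1)}\PP^2$ is accounted for via the relation $g\eta = 6(i\circ\nu)$. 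These pieces assemble into the claimed additive isomorphism.

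The main obstacle is pinning down the $\KMW$-module structure through the identity $\eta\cdot\id_{\PP^2} = 3(i\circ\nu\circ q)$. My approach would mirror the topological derivation of~(\ref{eq:etacp2}): with $\omega\colon S^{2+(2)}\to\PP^2$ denoting the lift corresponding to $g$, so that $q\circ\omega$ represents $\hyper$, the Toda bracket inclusion
\[ \langle\eta,q,\eta\smash\id_{\PP^2}\rangle\circ\omega \subset \langle\eta,q\circ\omega,\eta\rangle = \langle\eta,\hyper,\eta\rangle = \{6\nu,-6\nu\}, \]
which uses \cite[Proposition~4.1]{rondigs.moore}, together with the fact that $\omega^\ast q^\ast$ acts as multiplication by $2$ on the relevant stable stem, produces $\eta\id_{\PP^2}\in\{\pm 3(i\circ\nu\circ q)\}$. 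Fixing the sign would rely on a realization argument reducing to~(\ref{eq:etacp2}) or on a direct indeterminacy computation. This Toda bracket analysis, with its attendant motivic sign and indeterminacy bookkeeping, is the most delicate step.
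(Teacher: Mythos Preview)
Your proposal is correct and follows essentially the same route as the paper: both derive the short exact sequence~(\ref{eq:endo-p2}) from the cofiber sequence using that $\eta$ annihilates $\pi_{1+\bideg}\PP^2\iso\KMil$, compute the relevant quotients via Lemma~\ref{lem:ext-pi1p2} and Theorem~\ref{thm:pi1modeta}, and pin down $\eta\cdot\id_{\PP^2}$ through the Toda bracket inclusion $\langle\eta,q,\eta\smash\PP^2\rangle\circ g\subset\langle\eta,\hyper,\eta\rangle=\{6\nu,-6\nu\}$ together with $g^\ast q^\ast=\hyper\cdot(-)$. The only cosmetic difference is that the paper packages the extension computation via Lemma~\ref{lem:extkm}, whereas you compute the $\eta$-image directly; the paper, like you, only obtains $\pm 3(i\circ\nu\circ q)$ in the proof.
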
  

\begin{proof}
  As before, the cofiber sequence~(\ref{eq:cof-p2}) induces a short exact sequence
  \begin{equation*}
    0 \to \pi_{2+(\star+2)}\PP^2/\eta\pi_{2+(\star+1)}\PP^2 \to
    [\Sigma^{\bideg}\PP^2,\PP^2]\to
    {}_{\eta}\pi_{1+(\star+1)}\PP^2 \to 0.
  \end{equation*}
  The short exact sequence for $\pi_{1+\bideg}\PP^2$ specializes to the
  identification $\pi_{1+\bideg}\PP^2\iso \KMil$ mentioned already above.
  Since $\eta$ acts as zero on this $\KMW$-module,
  the short exact sequence~(\ref{eq:endo-p2}) of $\KMW$-modules
  follows. The identity $\id_{\PP^2}$ hits the canonical
  generator $i\in \pi_{1+(1)}\PP^2$.
  The action of $\eta$ in the $\KMW$-module structure on
  $[\Sigma^{\bideg}\PP^2,\PP^2]$ is then determined by
  specifiying $\eta \id_{\PP^2}\in [\Sigma^{(1)}\PP^2,\PP^2]$.
  More precisely, as in the proof of Lemma~\ref{lem:ext-pi1p2}
  there results a short exact sequence
  \[ 0 \to \Ext^1_{\KMil}(2\KMil,A)
  \to \Ext^1_{\KMW}(2\KMil,A)
  \to \Ext^1_{\KMW}(\KMil,A) \to 0 \]
  of abelian groups, where
  $A_\star := \pi_{2-(\star-2)}\PP^2/\eta\pi_{2-(\star-1)}\PP^2$.
  Lemma~\ref{lem:extkm} identifies the last extension group
  as
  \[ \Ext^1_{\KMW}(\KMil,\pi_{2-(\star-2)}\PP^2/\eta\pi_{2-(\star-1)}\PP^2) \iso {}_{\hyper}(\pi_{2+(3)}\PP^2/\eta\pi_{2+(2)}\PP^2)\iso {}_{2}\ZZ/6.\]
  Here the description of $\pi_{2+\bideg}\PP^2$ as a
  $\KMW$-module from Lemma~\ref{lem:ext-pi1p2} supplies the
  last isomorphism in this sequence, as well as the
  first isomorphism mentioned in the statement of the theorem. Hence
  $\eta\id_{\PP^2} = m (i\circ \nu\circ q)$ for some $m\in \ZZ$ which is
  unique up to multiples of 6. The element $\eta\id_{\PP^2}$
  turns out to be the unique nonzero element of order 2, as the Toda
  bracket $\langle \eta,\hyper=q\circ \liftofhyper,\eta\rangle$ implies.
  The properties
  of Toda brackets supply an inclusion
  \[ \langle \eta,q,\eta\smash \PP^2\rangle\circ \liftofhyper \subset
    \langle \eta,q,\eta\smash \PP^2 \circ \liftofhyper = \liftofhyper\circ \eta \rangle
    \subset \langle \eta,q\circ \liftofhyper,\eta\rangle = \{6\nu,-6\nu\} \]
  which shows that it does not contain zero. This already suffices to
  conclude.
  More precisely, since the composition
  \[ \pi_{1+(2)}\unit \xrightarrow{q^\ast} [ \Sigma^{1+(1)}\PP^2,S^{2+(1)} ]
    \xrightarrow{(\liftofhyper)^\ast} \pi_{1+(2)}\unit \]
  is multiplication with $\hyper$, one obtains
  $\langle \eta,q,\eta\smash \PP^2\rangle \subset \{3\nu\circ q,-3\nu\circ q\}$.
  Note that the homomorphism
  $[ \Sigma^{1+(1)}\PP^2,S^{2+(1)} ] \xrightarrow{(\liftofhyper)^\ast} \pi_{1+(2)}\unit$
  identifies with the inclusion $\ZZ/12\hookrightarrow \ZZ/24$, as one may
  deduce from the exact sequence
  \[  \pi_{1+(2)}\unit \xrightarrow{q^\ast}[ \Sigma^{1+(1)}\PP^2,S^{2+(1)} ]
    \xrightarrow{i^\ast} \pi_{0+(1)}\unit \xrightarrow{\eta ^\ast}\pi_{0+(2)}\unit.\]
  Hence $\id_{\PP^2}\in [\PP^2,\PP^2]$ satisfies
  $\eta\smash \PP^2 = 3 (i \circ \nu\circ q)$. 
\end{proof}

Theorem~\ref{thm:endo-p2} implies that
every element $x\in [\PP^2,\PP^2]$ can be expressed uniquely
as a sum $x_1\id_{\PP^2}+x_2(\liftofhyper\circ q)+x_3(i\circ \nu\circ q)$, where
$x_1,x_2\in \ZZ$ and $x_3\in K^\Mil_1/6$. It would probably be more honest
to think of the integers $x_1,x_2$ as the ranks of virtual quadratic forms.
In particular, the hyperbolic form ``$\hyper$'' corresponds to the integer ``2''.
Using that the composition $q\circ i$ is the zero map,
the ring structure is then given as
\begin{align*}
  x\circ y & = \bigl(x_1\id_{\PP^2}+x_2(\liftofhyper\circ q)+x_3(i\circ\nu\circ q)\bigr)\circ \bigl(y_1\id_{\PP^2}+y_2(\liftofhyper\circ q)+y_3(i\circ\nu\circ q)\bigr) \\
           & = x_1y_1\id_{\PP^2} +(x_1y_2+x_2y_1+2x_2y_2)(\liftofhyper\circ q) + (x_1y_3+x_3y_1+2x_3y_2)(i\circ \nu\circ q) 
\end{align*}
and in particular is not commutative if $2K^\Mil_1(F)/6$ is nonzero.
The group of units in $[\PP^2,\PP^2]$ consists of the elements
\[ \{\pm \id_{\PP^2}+x_3(i\circ\nu\circ q),\pm \id_{\PP^2}\mp \liftofhyper\circ q+x_3(i\circ\nu\circ q)\colon x_3\in K^\Mil_1/6\}. \]
If $F\subset \CC$, then complex realization
$[\Sigma^{(1)}\PP^2,\PP^2]\to [\Sigma\CP^2,\CP^2]$ is an isomorphism,
but $[\PP^2,\PP^2]\to [\CP^2,\CP^2]$ is possibly only surjective, not injective.
Nevertheless, every map in $[\PP^2,\PP^2]$ such that
its complex realization is a unit in $[\CP^2,\CP^2]$ is already a unit in
$[\PP^2,\PP^2]$.

\begin{remark}\label{rem:unit-p2}
  A different motivic type of endomorphisms of the projective plane
  occurs in the motivic stable homotopy category $\SH(\PP^2_F)$
  for $\PP^2_F$, with unit $\unit_{\PP^2_F}=\varphi^\ast(\unit_F)$. Here 
  $\varphi\colon\PP^2_F\to \Spec(F)$ is the structure morphism.
  The choice of a rational point provides a splitting
  $\unit_F\to \varphi_\sharp(\unit_{\PP^2_F}) \to \unit_F$
  of the counit,
  whence $\varphi_\sharp(\unit_{\PP^2_F})\simeq \PP^2_F\vee \unit_F$.
  Then $\varphi_\sharp$ induces
  a ring homomorphism 
  \[ \pi_{0+\bideg} \unit_{\PP^2_F} \xrightarrow{\varphi_\sharp}
    [\Sigma^{\bideg}\PP^2_+,\PP^2_+] \iso 
    [\Sigma^{\bideg}\PP^2,\PP^2] \directsum \pi_{0+\bideg}\unit_F
    \directsum [\Sigma^{\bideg}\PP^2,\unit] \]
  where the source
  \[ \pi_{0+\bideg} \unit_{\PP^2_F} 
    \iso [\Sigma^{\bideg}\varphi_\sharp(\unit_{\PP^2_F}),\unit_F]
    \iso [\Sigma^{\bideg}\PP^2,\unit_F] \directsum \pi_{0+\bideg}\unit_F\]
  is a commutative ring by definition. Using \cite[Theorem 2.7]{rondigs.moore},
  the short exact sequence
  \[ 0 \to \pi_{2+(2)}\unit_F/\eta\pi_{2+(1)}\unit_F \iso \KMil_2(F)/2
    \to [\PP^2,\unit] \to {}_{\eta}\pi_{1+(1)}\unit\iso \KMil_1(F)/24 \to 0 \]
  implies that $\pi_{0+(0)}\unit_{\PP^2_F}$ is not isomorphic to the
  Grothendieck-Witt ring of $\PP^2_F$, which is
  isomorphic to $\KMW_0(F)\directsum \KMil_0(F)$ \cite{walter.gw}.
\end{remark}

\section{Suslin's conjecture}
\label{sec:suslins-conjecture}

An application of some of the computations performed in Section~\ref{sec:over-field}
is a proof of Suslin's conjecture on the Hurewicz homomorphism from
Quillen to Milnor $K$-theory in degree four, exploiting the beautiful work~\cite{afw}.
Unstable homotopy sheaves will occur, as already in Remark~\ref{rem:unstable}.
Let
\[Q_{2n-1}:=\{(a,b)\in \A^n\times \A^n\colon \sum\limits_{j=1}^n a_jb_j = 1 \} \]
be the smooth affine quadric hypersurface which is weakly equivalent
to $\A^n\minus \{0\}$ via projection to the first $n$ coordinates \cite[Example 2.12(3)]{dugger-isaksen.hopf}.
The quotient scheme $Q_{2n-1}/\G$ with respect to
the free action $\lambda\cdot (a,b):=(\lambda a,\lambda^{-1}b)$
is then weakly equivalent via projection to the first $n$ coordinates
to $\PP^{n-1}$.
Given $(a,b,u)\in Q_{2n-1}\times \A^1\minus \{0\}$, let
$[a,b,u]\in \GL_n$ denote the matrix whose entry at $(j,k)$ is
$\delta_{jk}+(u-1)a_jb_k $, where $\delta_{jk}$ is the Kronecker symbol.
For a unit $u\in \G$, let $\Delta(u)$ denote the diagonal matrix
whose entries are $(u,1,\dotsc,1)$.
The map
\begin{equation}\label{eq:mapsln}
  \psi_n\colon Q_{2n-1}\times (\A^1\minus \{0\})\to \SL_n,\, \psi_n(a,b,u):= \Delta(u^{-1})\cdot [a,b,u]
\end{equation}
is compatible with the given $\G$ action on the first factor
(and trivial actions on
the other factor and $\SL_n$) and sends
$Q_{2n-1}\times \{1\}\cup \bigl\lbrace\bigl((1,0,\dotsc,0),(1,0,\dotsc,0)\bigr)\bigr\rbrace \times (\A^1\minus \{0\})$ to the identity matrix, the canonical
basepoint in $\SL_n$. Let
$\psi_n\colon \Sigma^{(1)}Q_{2n-1}/\G\to \SL_n$ denote also the induced 
pointed map, a variant of the map (with the same notation) to $\GL_n$
constructed in \cite[Section 5]{williams.stiefel}. Its complex
realization is denoted $j_n$ in \cite[p.~180]{mukai.transfer}, and
$f_{\SU(n)}$ in the even more classical source \cite[Section 4]{yokota}.
It is straightforward to check that the diagram
\begin{equation}\label{eq:diagram-psi}
  \begin{tikzcd}
    \Sigma^{(1)}Q_{2n-1}/\G \ar[d] \ar[r,"\psi_n"] & \SL_n \ar[d] \\
    \Sigma^{(1)}Q_{2n+1}/\G  \ar[r,"\psi_{n+1}"] & \SL_{n+1} 
  \end{tikzcd}
\end{equation}
with obvious inclusions as vertical maps commutes.

\begin{lemma}\label{lem:gmap2-weq}
  The map $\psi_2\colon \Sigma^{(1)}Q_3/\G \to \SL_2$
  is a weak equivalence over $\Spec(\ZZ)$.
\end{lemma}

\begin{proof}
  Let $\{b_1= 0\}\hookrightarrow Q_3$ 
  denote the smooth closed subscheme where $b_1 = 0$, and
  $\{b_1\neq 0\}\hookrightarrow Q_3$ 
  its open complement. The map
  \[\A^1\times (\A^1\minus \{0\})\to \{b_1=0\},\,
  (x,y)\mapsto (x,y,0,y^{-1})\]
  is an isomorphism.
  Its image $\overline{\{b_1=0\}}\hookrightarrow Q_3/\G$ 
  is a smooth closed subscheme isomorphic to $\A^1$, 
  with trivial normal bundle. The map
  $\psi_2\colon Q_3\times \A^1\minus \{0\} \to \SL_2$ sends
  the product $\{b_1=0\}\times \A^1\minus \{0\}$
  to the smooth closed subscheme $\{c_{21}=0\}\hookrightarrow \SL_2$,
  and the induced map
  $\overline{\{b_1=0\}}\times \A^1\minus \{0\}\to \{c_{21}=0\}$
  is a weak equivalence. The latter follows from placing it at
  the top in the commutative diagram
  \[
  \begin{tikzcd}
    \overline{\{b_1=0\}}\times \A^1\minus \{0\}\ar[r] \ar[d,"\pr_2"] &
    \{c_{21}=0\}\ar[d,"c_{22}"] \\
    \A^1\minus\{0\}\ar[r,"\id"] & \A^1\minus \{0\}
  \end{tikzcd}
  \]
  where the vertical projections are weak equivalences.
  The open complement
  $\{b_1\neq 0\}\hookrightarrow Q_3$ 
  contains the closed subscheme
  $\{b_2=0\}\hookrightarrow Q_3$ 
  as a strong $\A^1$-deformation retract, as the map
  \[ \{b_1\neq 0\} \times \A^1\to \{b_1\neq 0\},\,
  \bigl((a,b),t\bigr) \mapsto (ta_1+(1-t)b_1^{-1},a_2,b_1,tb_2) \]
  shows. This strong $\A^1$-deformation retraction is $\G$-equivariant,
  whence also the inclusion
  $\overline{\{b_2=0\}}\hookrightarrow \overline{\{b_1\neq 0\}}$
  is a strong $\A^1$-deformation retract. Note that
  $\overline{\{b_2=0\}}$ is isomorphic to $\A^1$. 
  Homotopy purity \cite[Theorem 3.2.23]{mv} supplies a homotopy cofiber sequence
  \[ \A^1 \iso \overline{\{b_2=0\}}\simeq
  \overline{\{b_1\neq 0\}} \hookrightarrow Q_3/\G
  \to \Sigma^{1+(1)}\{b_1=0\}_+ \]
  inducing a homotopy cofiber sequence after applying $\Sigma^{(1)}$.
  In particular, since $\overline{\{b_2=0\}}$ is $\A^1$-contractible,
  the map $\Sigma^{(1)}Q_3/\G\to \Sigma^{(1)}\Sigma^{1+(1)}\{b_{1}=0\}_+$
  is a weak equivalence.
  
  Similarly, the smooth closed subscheme
  $\{c_{21}=0\}\rclosed \SL_2$
  gives rise, via homotopy purity, to a homotopy cofiber sequence
  \[ \{c_{21}\neq 0\} \ropen
  \SL_2 \to \Sigma^{1+(1)}\{c_{21}=0\}_+ \]
  which can be related to the homotopy cofiber sequence above as follows.
  While the map
  $\psi_2\colon \overline{\{b_1=0\}}\times \A^1\minus \{0\}\to \{c_{21}=0\}$
  is a weak equivalence, $\psi_2(\{b_1\neq 0\}\times \A^1\minus \{0\})$ is
  not contained in $\{c_{21}\neq 0\}$ but instead coincides with the
  union $U:=\{c_{21}\neq 0\}\cup \{c_{11}=c_{22}=1 \ \mathrm{and}\ c_{21}=0\}$.
  The strong $\A^1$-deformation retraction
  \[ \{c_{21}\neq 0\} \times \A^1 \to \{c_{21}\neq 0 \},\,(C,t)\mapsto
  \begin{pmatrix}
    c_{11}+t(1-c_{11}) &
    c_{12} +t(1-c_{11})\tfrac{c_{22}}{c_{21}} \\ c_{21} & c_{22}
  \end{pmatrix} \]
  extends via the constant $\A^1$-homotopy to a
  strong $\A^1$-deformation retration of $U$ to
  $V:=\{C\in U\colon c_{11}=1\}$, which in turn
  deforms via 
  \[ V\times \A^1\to V,\, (C,t)\mapsto
  \begin{pmatrix}
    c_{11}=1 & c_{12} \\
    tc_{21} & tc_{22}+1-t 
  \end{pmatrix} \]
  to the affine line $\{c_{11}=c_{22}=1 \ \mathrm{and}\ c_{21}=0\}\rclosed \SL_2$.
  The induced diagram of homotopy cofiber sequences
  \begin{equation*}
    \begin{tikzcd}
      \{c_{21}\neq 0\} \ar[r] \ar[d] &
      \SL_2 \ar[r] \ar[d] &
      \Sigma^{1+(1)}\{c_{21}=0\}_+  \ar[d]\\
      \A^1\simeq U \ar[r] & \SL_2 \ar[r,"\simeq"] & \SL_2/U
    \end{tikzcd}
  \end{equation*}
  identifies $\Sigma^{1+(1)}\{c_{21}=0\}_+/\Sigma^{1+(1)}\{1\}_+\simeq \SL_2/U$
  and induces a commutative diagram
  \begin{equation*}
    \begin{tikzcd}
      \Sigma^{(1)}Q_3/\G\ar[r,"\simeq"] \ar[d,"\psi_2"] &
      \Sigma^{1+(2)}\overline{\{b_1=0\}}_+ \ar[d,"\simeq"] \\
      \SL_2 \ar[r,"\simeq"] & \Sigma^{1+(1)}\{c_{21}=0\}_+/\Sigma^{1+(1)}\{1\}_+
    \end{tikzcd}
  \end{equation*}
  in which the vertical map on the right hand side is a weak equivalence,
  because it is induced by the isomorphism
  $\psi_2\colon \overline{\{b_1=0\}}\times \A^1\minus \{0\} \xrightarrow{\iso}
  \{c_{21}=0\} $. Hence $\psi_2$ is a weak equivalence as claimed.
\end{proof}

The proof of the following statement is essentially a modification
of Jean Fasel's unpublished proof for the corresponding statement on symplectic
groups; I thank him sincerely for the inspiration.

\begin{proposition}\label{prop:hocofib-sln}
  The inclusion $\SL_n\rclosed \SL_{n+1}$ fits into a
  homotopy cofiber sequence
  \[ \SL_n\rclosed \SL_{n+1}\to \Sigma^{n+(n+1)}(\SL_n)_+ \]
  over $\Spec(\ZZ)$.
\end{proposition}

\begin{proof}
  A matrix $C\in \SL_{n}$ has entries denoted
  $c_{1,1},\dotsc,c_{1,n},c_{2,1},\dotsc,c_{n,n}$.
  The homotopy purity theorem  \cite[Theorem 3.2.23]{mv},
  applied to the smooth closed subscheme
  $W:=\{c_{n+1,1}=\dotsc=c_{n+1,n}=0\}\rclosed \SL_{n+1}$
  (which is a global complete intersection 
  and thus has a trivial normal bundle), 
  supplies a homotopy cofiber
  sequence:
  \[ \SL_{n+1}\minus W \ropen \SL_{n+1} \to \Sigma^{n+(n)} W_+\]
  The inclusion $(\SL_{n+1}\minus W)\cap \{c_{n+1,n+1}=1\}
  \hookrightarrow \SL_{n+1}\minus W$ is a weak equivalence.
  Let $q\colon \SL_{n+1}\minus W\to \A^n\minus \{0\}$ send $C$ to
  $(c_{n+1,1},\dotsc,c_{n+1,n})$. Let $X$ and $Y$ be defined by
  taking pullbacks
  \begin{equation}\label{eq:pb-deform-2}
    \begin{tikzcd}
      X\ar[r] \ar[d,"\simeq"] & Y\ar[r]\ar[d,"\simeq"] & Q_{2n-1} \ar[d,"\pr_1"] \\
      (\SL_{n+1}\minus W)\cap\{c_{n+1,n+1}=1\} \ar[r] & \SL_{n+1}\minus W\ar[r,"q"] & \A^n\minus \{0\}
    \end{tikzcd}
  \end{equation}
  where the vertical maps are Zariski locally trivial fibrations
  with $\A^{n-1}$ as fiber, and hence weak equivalences.
  An element in $Y$
  is a pair $(E,d)$ with $E\in \SL_{n+1}\minus W$ and
  $d\in \A^n\minus \{0\}$ such that $\sum_{j=1}^n e_{n+1,j}\cdot d_j=1$.
  The map
  \[ Y\times \A^1\to Y,\, ((E,d),t) \mapsto
    E\cdot \Bigl(\begin{pmatrix}
      1_n & t(1-e_{n+1,n+1})d \\
      0  & 1 
    \end{pmatrix},d\Bigr) \]
  is a strong $\A^1$-deformation retraction onto $X$. Moreover,
  \[  (E,t)\mapsto
    \begin{pmatrix} 1_n & 
      -t E_{j,n+1} \\ 0 & 1
    \cdot E \end{pmatrix}\]
  is an $\A^1$-deformation retraction of $(\SL_{n+1}\minus W)\cap\{c_{n+1,n+1}=1\}$
  onto the closed subscheme
  \[\SL_n\times \bigl(\A^n\minus \{0\}\bigr) \hookrightarrow
    (\SL_{n+1}\minus W)\cap\{c_{n+1,n+1}=1\},\,(E,d)\mapsto
    \begin{pmatrix} E & 0 \\ d & 1 
    \end{pmatrix}.\]
  As in the proof of Lemma~\ref{lem:gmap2-weq},
  the homotopy purity theorem provides a homotopy cofiber sequence
  \begin{equation}\label{eq:hocof-2}
    \SL_n\times \bigl(\A^n\minus \{0\}\bigr) \hookrightarrow \SL_{n+1}
    \to \Sigma^{n+(n)} W_+
  \end{equation}
  in which $W = \{c_{n+1,1}=\dotsc=c_{n+1,n}=0\}
  \iso \SL_n\times (\A^1\minus \{0\})\times \A^n\simeq \SL_n\times (\A^1\minus \{0\})$.
  Enlarging the subscheme $\SL_n\times \bigl(\A^n\minus \{0\}\bigr)$
  in~(\ref{eq:hocof-2}) to $\SL_n\times \A^n$ then provides the desired
  homotopy cofiber sequence:
  \[ \SL_n\simeq \SL_n\times \A^n \hookrightarrow \SL_{n+1}
    \to \Sigma^{n+(n+1)} (\SL_n)_+\]
\end{proof}

\begin{corollary}\label{cor:psi-hocof-hofib}
  In the commutative diagram
  \begin{center}
    \begin{tikzcd}
      \Sigma^{(1)}Q_{2n-1}/\G \ar[r] \ar[d,"\psi_n"] & \Sigma^{(1)}Q_{2n+1}/\G \ar[r]\ar[d,"\psi_{n+1}"] &
      S^{n+(n+1)} \ar[d] \\
      \SL_{n}\ar[r] &\SL_{n+1}\ar[r] & \SL_{n+1}/\SL_{n}
    \end{tikzcd}
  \end{center}
  in which the top row is a homotopy cofiber sequence and the
  bottom row is a homotopy fiber sequence,
  the canonically induced map $S^{n+(n+1)}\to \SL_{n+1}/\SL_n$ is
  a weak equivalence over $\Spec(\ZZ)$.
\end{corollary}

\begin{proof}
  Proposition~\ref{prop:hocofib-sln} and
  the standard homotopy cofiber sequence for
  $Q_{2n-1}/\G\rclosed Q_{2n+1}/\G$ give a diagram of homotopy cofiber sequences
  \begin{center}
    \begin{tikzcd}
      \Sigma^{(1)}Q_{2n-1}/\G \ar[r] \ar[d,"\psi_n"] & \Sigma^{(1)}Q_{2n+1}/\G \ar[r]\ar[d,"\psi_{n+1}"] &
      \Sigma^{(1)}\PP^n/\PP^{n-1} = \Sigma^{n+(n+1)}(\Spec(\ZZ))_+ \ar[d] \\
      \SL_{n}\ar[r] &\SL_{n+1}\ar[r] & \Sigma^{n+(n+1)}(\SL_n)_+
    \end{tikzcd}
  \end{center}
  where the vertical map on the right hand side is induced by the inclusion
  of the identity matrix. The canonical
  map $\Sigma^{n+(n+1)}(\SL_n)_+\to \SL_{n+1}/\SL_n$
  is induced by the structure map $\SL_n\to \Spec(\ZZ)$,
  because the pullback square
  \begin{center}
    \begin{tikzcd}
      W\ar[r] \ar[d,"\ell"] & \SL_{n+1} \ar[d,"\mathrm{last\ row}"] \\
      \{0\}\times (\A^1\minus \{0\}) \ar[r] & \A^{n+1}\minus \{0\}
    \end{tikzcd}
  \end{center}
  of smooth schemes induces a commutative diagram
  \begin{center}
    \begin{tikzcd}
      (\A^n/\A^n\minus \{0\})\smash W_+ \ar[r,"\simeq"] \ar[d,"(\A^n/\A^n\minus \{0\})\smash \ell_+"] &
      \SL_{n+1}/(\SL_{n+1}\minus W) \ar[d] \\
      (\A^n/\A^n\minus \{0\})\smash (\A^1\minus \{0\})_+ \ar[r,"\simeq"] &
      \A^{n+1}\minus \{0\} /(\A^n\minus \{0\})\times \A^1
    \end{tikzcd}
  \end{center}
  of homotopy purity transformations, where the map $\ell$ sends
  a matrix in $W\simeq \SL_n\times (\A^1\minus \{0\})$ to its last diagonal element
  and hence is induced by the structure map $\SL_n\to \Spec(\ZZ)$.
  Proceeding through the zigzag relating $\SL_{n+1}/(\SL_{n+1}\minus W)$
  with $\SL_{n+1}/\SL_n$ produced in the proof of Proposition~\ref{prop:hocofib-sln}
  provides the statement.
\end{proof}

With the help of $\psi_3$, the cell structure of $\SL_3$
looks as follows.

\begin{lemma}\label{lem:cofiber-psi3}
  The homotopy cofiber of $\psi_3\colon \Sigma^{(1)}\PP^2\to \SL_3$
  over $\Spec(\ZZ)$ is given by $S^{3+(5)}$.
\end{lemma}

\begin{proof}
  Lemma~\ref{lem:gmap2-weq}, Proposition~\ref{prop:hocofib-sln}
  and elementary properties of homotopy pushout diagrams imply
  that the diagram
  \begin{equation*}
    \begin{tikzcd}
      \Sigma^{(1)}Q_{5}/\G \ar[r,"\psi_3"]\ar[d] & \SL_3 \ar[d] \\
      S^{2+(3)}\ar[r] & \Sigma^{2+(3)}(\SL_2)_+
    \end{tikzcd}
  \end{equation*}
  in which the vertical maps are the canonical quotient maps and the
  bottom horizontal map is the canonical inclusion is a homotopy
  pushout diagram. The result follows.
\end{proof}

More generally, the total homotopy cofiber of
diagram~(\ref{eq:diagram-psi}) can be determined as follows.

\begin{lemma}\label{lem:connectivity-psi-diagram}
  The homotopy cofiber of the map
  $\Sigma^{(1)}Q_{2n+1}/\G\cup_{\Sigma^{(1)}Q_{2n-1}/\G} \SL_n \to \SL_{n+1}$
  induced by the commutative diagram
  \begin{equation*}
    \begin{tikzcd}
      \Sigma^{(1)}Q_{2n-1}/\G \ar[r] \ar[d,"\psi_n"] & \Sigma^{(1)}Q_{2n+1}/\G
      \ar[d,"\psi_{n+1}"] \\
      \SL_{n}\ar[r] &\SL_{n+1}
    \end{tikzcd}
  \end{equation*}
  is equivalent to $\Sigma^{n+(n+1)}\SL_n$ over $\Spec(\ZZ)$. 
\end{lemma}

\begin{proof}
  This follows from Proposition~\ref{prop:hocofib-sln},
  Corollary~\ref{cor:psi-hocof-hofib}
  and a straightforward manipulation of homotopy pushout squares.
\end{proof}

In the following, $Q_{2n-1}/\G$ will be identified via
projection to the first $n$ coordinates with $\PP^{n-1}$,
which gives rise to maps such as
$\psi_n\colon \Sigma^{(1)}\PP^{n-1}\simeq \Sigma^{(1)}Q_{2n-1}/\G\to \SL_n$
in the homotopy category. Recall from \cite[Convention 2.3.5]{aww}
that  a map $f$ of pointed motivic spaces is $\A^1$-$n$-connected if
its homotopy fiber\footnote{One has to take homotopy fibers at all
  $\A^1$-path components in the case (which will not occur here)
  that the target motivic space is not $\A^1$-$0$-connected.}
$\hofib(f)$ is $\A^1$-$(n-1)$-connected, which is
equivalent to the homomorphism $\pia_jf$ being an isomorphism
for $j<n$ and an epimorphism for $j=n$.

\begin{lemma}\label{lem:pi1-computation}
  Let $F$ be a field. The inclusion $\Sigma^{(1)}\PP^1\to \Sigma^{(1)}\PP^2$
  induces the canonical projection
  \[ \KMW_2\iso \pia_1 \Sigma^{(1)}\PP^1\to \pia_1 \Sigma^{(1)}\PP^2\iso \KMil_2\]
  on $\A^1$-fundamental groups. The inclusion
  $\Sigma^{(1)}\PP^{n-1}\to \Sigma^{(1)}\PP^n$ is $\A^1$-$(n-1)$-connected
  for all $n>0$.
\end{lemma}

\begin{proof}
  The assumption that $F$ is perfect may be imposed by pulling back
  from a perfect subfield, over which everything in sight is defined.
  As a suspension of an $\A^1$-$0$-connected variety, $\Sigma^{(1)}\PP^n$ is
  $\A^1$-$0$-connected for all $n$ \cite[Lemma 3.3.1]{aww}.
  The determination of $\pia_1\PP^n$ from
  \cite[Theorem 7.13 and Theorem 7.29]{morel.at}
  implies that the map $\pia_1\PP^1\to \pia_1\PP^n$ is surjective for all $n>0$.
  In other words, the inclusion $\PP^1\to \PP^n$ is $\A^1$-$1$-connected for all $n>1$.
  While smashing with $\G$ preserves the simplicial connectivity of a map,
  it is a priori not clear whether smashing with $\G$ preserves the
  $\A^1$-connectivity of a map. Let $\Sing$ denote the endofunctor
  on (pointed) simplicial presheaves on $\Sm_F$
  introduced in \cite[p.~87]{mv}. It commutes with limits and colimits. Since
  the natural transformation $\G\to \Sing(\G)$ is an isomorphism, there
  results a natural isomorphism $\Sigma^{(1)}\Sing(B)\xrightarrow{\iso}
  \Sing(\Sigma^{(1)}B)$ for every pointed simplicial presheaf $B$.
  Choosing appropriate $\A^1$-naive models for projective spaces -- 
  which is possible by \cite[Example 4.2.13]{ahw.2} -- provides a model
  for the canonical inclusion $\PP^1\to \PP^n$ such that
  $\Sing(\PP^1)\to \Sing(\PP^n)$ is simplicially $1$-connected,
  and hence so is
  \begin{equation}\label{eq:gm-susp-inclusion}
    \Sigma^{(1)}\Sing(\PP^1)\iso \Sing(\Sigma^{(1)}\PP^1)\to \Sing(\Sigma^{(1)}\PP^n)\iso \Sigma^{(1)}\Sing(\PP^n).
  \end{equation}
  The source is stalkwise equivalent to $\Sing(\SL_2)$ by \cite[Lemma 4.2.4]{ahw.2}.
  Since $\SL_2$ is $\A^1$-naive by \cite[Theorem 4.2.1]{ahw.2} (applied to
  $Q_3\iso \SL_2$), there results an isomorphism
  \[ \pia_0\Omega\Sigma^{(1)}\Sing(\PP^1)\iso \pia_1\Sigma^{(1)}\PP^1\iso \pia_1\SL_2
    \iso \KMW_2\]
  where the last isomorphism follows from \cite[Theorem 1.27]{morel.at} (see also \cite[Theorem 1]{morel-sawant}).
  Thus $\pia_0\Omega\Sigma^{(1)}\Sing(\PP^1)$ is a strictly $\A^1$-invariant
  sheaf. To prove the same for $\pia_0\Omega\Sigma^{(1)}\Sing(\PP^n)$,
  let $G$ denote the (simplicial) homotopy fiber of the
  map~(\ref{eq:gm-susp-inclusion}).  
  The aforementioned connectivity of the map~(\ref{eq:gm-susp-inclusion})
  implies that $G$ is simplicially $0$-connected, and hence
  $\A^1$-$0$-connected by \cite[Cor.~2.3.22]{mv}.
  Since $\Omega\Sigma^{(1)}\Sing(\PP^n)$ is the simplicial homotopy fiber
  of the canonical map $G\to \Sigma^{(1)}\Sing(\PP^1)$, 
  \cite[Theorem 6.56]{morel.at} (see also \cite[Corollary 2.3.6]{aww})
  provides an exact sequence
  \[ \pia_1 G\to \pia_1\Sigma^{(1)}\Sing(\PP^1) \to \pia_0\Omega\Sigma^{(1)}\Sing(\PP^n)\to \pia_0G= 0 \]
  using that $\pia_0\Omega\Sigma^{(1)}\Sing(\PP^1)$ is a strictly
  $\A^1$-invariant sheaf.
  In this exact sequence $\pia_1G$ is strongly $\A^1$-invariant by
  \cite[Theorem 6.1]{morel.at}, its image in
  $\pia_1\Sigma^{(1)}\Sing(\PP^1)$ is strongly $\A^1$-invariant
  by~\cite[Theorem 1.6]{choudhury-hogadi}, and moreover abelian,
  and the cokernel 
  $\pia_0\Omega\Sigma^{(1)}\Sing(\PP^n)$ is then a strictly $\A^1$-invariant
  sheaf by \cite[Corollary 6.24]{morel.at}. As a consequence
  \cite[Theorem 6.56]{morel.at} 
  applies to show that
  $\Sigma^{(1)}\PP^1\to \Sigma^{(1)}\PP^n$ is $\A^1$-$1$-connected.

  To determine $\pia_1\Sigma^{(1)}\PP^n$ for $n>1$, 
  Morel's $\A^1$-Hurewicz
  theorem \cite[Theorem 6.35]{morel.at} implies that
  the Hurewicz transformation 
  $\pia_1\Sigma^{(1)}\PP^n \to \underline{H}_1^{\A^1}\Sigma^{(1)}\PP^n$ is an
  isomorphism for all $n$. For $n=2$,
  the latter can be determined via the cofiber
  sequence
  \[ S^{1+(3)} \xrightarrow{\eta} \Sigma^{(1)}\PP^{1}\to \Sigma^{(1)}\PP^2 \]
  as $\underline{H}_1^{\A^1}\Sigma^{(1)}\PP^2\iso \KMil_2$.
  Similarly, for $n>2$ the inclusion
  $\PP^{n-1} \hookrightarrow \PP^n$
  is $\A^1$-$(n-1)$-connected, as one may deduce from the
  homotopy fiber of the ``covering'' map $\A^n\minus \{0\}\hookrightarrow
  \A^{n+1}\minus \{0\}$ of universal $\A^1$-coverings.
  Again smashing with $\G$ preserves the simplicial connectivity of
  $\PP^{n-1} \hookrightarrow \PP^n$. Arguing with $\A^1$-naive models
  and the functor $\Sing$
  as before provides that $\Sigma^{(1)}\PP^{n-1}\to \Sigma^{(1)}\PP^n$
  is $\A^1$-$(n-1)$-connected, again invoking \cite[Theorem 6.56]{morel.at}
  or \cite[Corollary 2.3.6]{aww} and the strict $\A^1$-invariance of
  $\pia_0\Omega\Sigma^{(1)}\Sing(\PP^n)$ already established.
\end{proof}

The pushout in diagram~(\ref{eq:diagram-psi}) gives rise to a map
\begin{equation}\label{eq:theta}
  \theta_{n+1}\colon \Sigma^{(1)}\PP^n\cup_{\Sigma^{(1)}\PP^{n-1}} \SL_n \to \SL_{n+1}
\end{equation}
which factors $\psi_{n+1}\colon \Sigma^{(1)}\PP^n\to \SL_{n+1}$ for every $n>0$.

\begin{proposition}\label{prop:image-diagram-psi}
  Let $F$ be a field.
  The image of the homomorphism
  $\pia_n\SL_{n+1}\to \pia_nS^{n+(n+1)}$ induced
  by taking the last column of a matrix is isomorphic to the
  image of the homomorphism
  $\pia_n\Sigma^{(1)}\PP^n\cup_{\Sigma^{(1)}\PP^{n-1}}\SL_n\to \pia_nS^{n+(n+1)}$
  induced by the canonical quotient map collapsing $\SL_n$ to the basepoint.
\end{proposition}

\begin{proof}
  In case $n=1$, both the last column map and the quotient map are
  equivalences, whence the induced homomorphisms are
  isomorphisms. Let $n>1$.
  Diagram~(\ref{eq:diagram-psi}) and the map $\theta_{n+1}$
  defined in~(\ref{eq:theta}) induce the following
  commutative diagram
  \begin{equation}\label{eq:hocof-hofib-suslin}
    \begin{tikzcd}
      \pia_n \Sigma^{(1)}\PP^{n-1} \ar[r] \ar[d,"\pia_n\psi_n"] & \pia_n\Sigma^{(1)}\PP^{n} \ar[r]\ar[d] &
      \pia_n S^{n+(n+1)}  \ar[d,"\iso"] \\
      \pia_n \SL_n \ar[r] \ar[d,"\id"] \ar[r] & \pia_n\Sigma^{(1)}\PP^{n}\cup_{\Sigma^{(1)}\PP^{n-1}}\SL_n \ar[r]\ar[d,"\pia_n\theta_{n+1}"] &
      \pia_n S^{n+(n+1)}  \ar[d,"\iso"] \\
      \pia_n\SL_{n}\ar[r] &\pia_n\SL_{n+1} \ar[r] & \pia_nS^{n+(n+1)} 
    \end{tikzcd}
  \end{equation}
  of sheaves of $\A^1$-homotopy groups, in which the vertical homomorphisms
  on the right hand side are isomorphisms by
  Corollary~\ref{cor:psi-hocof-hofib} and by construction.
  In particular, the image of the homomorphism
  $\pia_n\Sigma^{(1)}\PP^n\cup_{\Sigma^{(1)}\PP^{n-1}}\SL_n\to \pia_nS^{n+(n+1)}$
  embeds in the image of the homomorphism
  $\pia_n\SL_{n+1}\to \pia_nS^{n+(n+1)}$.
  To prove that the image of
  $\pia_n\SL_{n+1}\to \pia_nS^{n+(n+1)}$
  coincides with the image of
  $\pia_n\Sigma^{(1)}\PP^n\cup_{\Sigma^{(1)}\PP^{n-1}} \SL_n \to \pia_n S^{n+(n+1)}$
  induced by collapsing $\SL_n$, it suffices to prove that the
  homomorphism
  $\pia_n\theta_{n+1}\colon \pia_n\Sigma^{(1)}\PP^n\cup_{\Sigma^{(1)}\PP^{n-1}} \SL_n \to
  \pia_n\SL_{n+1}$
  is surjective. This in turn follows if the  map
  $\theta_{n+1}$
  is $\A^1$-$n$-connected. This is the connectivity of its
  homotopy cofiber $\cone(\theta_{n+1})$
  by Lemma~\ref{lem:connectivity-psi-diagram}.
  Unfortunately this does not necessarily imply that its homotopy fiber
  $\hofib(\theta_{n+1})$
  is $\A^1$-$(n-1)$-connected. To conclude this nevertheless, start with the case
  $n=2$. Then the map $\theta_3$ in question
  coincides with $\psi_3\colon \Sigma^{(1)}\PP^2\to \SL_3$ up to equivalence
  by Lemma~\ref{lem:gmap2-weq}. Lemma~\ref{lem:pi1-computation}
  implies together with the determination of $\pia_1\SL_n$ from
  \cite[Theorem 1]{morel-sawant} that $\pia_1\psi_3$ is an isomorphism. There
  results an exact sequence
  \[ \pia_2\Sigma^{(1)}\PP^2\xrightarrow{\pia_2\psi_3} \pia_2\SL_3\to \pia_1\hofib(\psi_3) \to 0 \]
  whence it remains to prove that $\pia_2\psi_3$ is an epimorphism.
  Consider the commutative diagram
  \begin{equation}\label{eq:bottom-exact}
    \begin{tikzcd}
      \pia_2\Sigma^{(1)}\PP^1\ar[r] \ar[d,"\psi_2"] &
      \pia_2\Sigma^{(1)}\PP^2\ar[r] \ar[d,"\psi_3"] &
      \pia_2S^{2+(3)} \ar[r] \ar[d,"\iso"] &
      \pia_1\Sigma^{(1)}\PP^1\ar[r] \ar[d,"\psi_2"]  &
      \pia_1\Sigma^{(1)}\PP^2\ar[r] \ar[d,"\psi_3"] & 0 \\
      \pia_2\SL_2\ar[r]  &
      \pia_2\SL_3\ar[r] &
      \pia_2\A^{3}\minus \{0\} \ar[r]  &
      \pia_1\SL_2\ar[r] &
      \pia_1\SL_3\ar[r]  & 0
    \end{tikzcd}
  \end{equation}
  induced by diagram~(\ref{eq:diagram-psi}) in which
  the bottom row is induced by the homotopy fiber sequence
  \[ \SL_2\to \SL_3 \xrightarrow{\mathrm{last}\ \mathrm{column}} \A^{3}\minus \{0\} \]
  and in particular exact. The top row in diagram~(\ref{eq:bottom-exact})
  is induced by the homotopy cofiber sequence
  \[ \Sigma^{(1)}\PP^1\to \Sigma^{(2)}\PP^2\to S^{2+(3)}\]
  and the weak equivalence $\psi_2\colon \Sigma^{(1)}\PP^1\to \SL_2$
  from Lemma~\ref{lem:gmap2-weq}, in the sense that the homomorphism
  $\pia_2S^{2+(3)}\to \pia_1\Sigma^{(1)}\PP^1$ is the composition of
  the inverse of $\pia_1\psi_2$ and
  $\pia_2S^{2+(3)}\iso \pia_2\A^3\minus \{0\}\to \pia_1\SL_2$.
  Here the first isomorphism follows from Corollary~\ref{cor:psi-hocof-hofib}.
  It follows that the upper row in diagram~(\ref{eq:bottom-exact})
  is exact at the spots involving $\pia_1$. To conclude
  exactness at $\pia_2S^{2+(3)}$, it remains to prove that the image
  of $\pia_2\Sigma^{(1)}\PP^2\to \pia_2S^{2+(3)}$ contains the kernel
  of $\pia_2S^{2+(3)}\to \pia_1\Sigma^{(1)}\PP^1$. By construction
  of the homotopy cofiber sequence, the latter coincides
  with the kernel of $\eta\colon \KMW_3\to \KMW_2$, which
  is $\hyper\KMW_3$ by Theorem~\ref{thm:ker-eta-pi0}.
  The map
  \[ \PP^1\times \PP^1\to \PP^2,\,\bigl((a_0:a_1),(b_0:b_1)\bigr)\mapsto (a_0b_0:a_0b_1+a_1b_0:a_1b_1)\]
  induces a commutative diagram
  \begin{equation*}
    \begin{tikzcd}
      \PP^1\times \PP^1 \ar[r] \ar[d,"\mathrm{can.}"] & \PP^2 \ar[d,"\mathrm{can.}"] \\
      \PP^1\smash \PP^1 \ar[r,"\hyper"] & \PP^2/\PP^1
    \end{tikzcd}
  \end{equation*}
  in which the identification of the lower horizontal map
  can be deduced from $\PP^1$-stabilizing first, then observing that its
  $\PP^1$-stabilization is in the kernel of multiplication with $\eta$ on the
  Grothendieck-Witt ring (hence an integer multiple of $\hyper$ by
  Theorem~\ref{thm:ker-eta-pi0}),  and finally a degree argument
  using motivic cohomology or realization,
  showing that the rank of the integer multiple of $\hyper$ is $2$.
  Morel's Theorem \cite[Theorem 1.23]{morel.at}
  then implies that the image of the homomorphism
  $\pia_2\Sigma^{(1)}\PP^1\times \PP^1\to \pia_2\Sigma^{(1)}\PP^2\to \pia_2S^{2+(3)}$
  contains $\hyper\KMW_3$. In particular, the top row in
  diagram~(\ref{eq:bottom-exact}) is also exact at $\pia_2 S^{2+(3)}$.
  Together with the isomorphism $\pia_2\psi_2$ from Lemma~\ref{lem:gmap2-weq},
  a diagram chase then provides that $\pia_2\psi_3$ is an epimorphism.  
  
  Diagram~(\ref{eq:diagram-psi}) furthermore implies that $\psi_{n+1}$ is at least
  $\A^1$-2-connected for all $n>1$.
  Invoking the $\A^1$-van Kampen theorem \cite[Theorem 3.10]{wendt.toric},
  \cite[Theorem 7.12]{morel.at} provides
  with Lemma~\ref{lem:pi1-computation} that $\pia_1\theta_{n+1}$
  is an isomorphism for all $n>1$.
  Hence 
  $\theta_{n+1}\colon \Sigma^{(1)}\PP^n\cup_{\Sigma^{(1)}\PP^{n-1}} \SL_n \to \SL_{n+1}$
  is $\A^1$-2-connected as well.
  To reach further, let $n>2$ and consider the transformation
  \begin{equation}\label{eq:fiber-seq-trans}
    \begin{tikzcd}
      \Omega S^{n+(n+1)} \ar[r] \ar[d] & \SL_n \ar[r] \ar[d] & \SL_{n+1} \ar[d,"\id"] \\
      \hofib(\theta_{n+1}) \ar[r] & \Sigma^{(1)}\PP^n\cup_{\Sigma^{(1)}\PP^{n-1}} \SL_n
      \ar[r,"\theta_{n+1}"] & \SL_{n+1}
    \end{tikzcd}
  \end{equation}
  of homotopy fiber sequences, inducing a transformation of long exact
  sequences of homotopy groups. The vertical map in the middle
  of diagram~(\ref{eq:fiber-seq-trans}) is $\A^1$-$(n-1)$-connected.
  In fact, as the cobase change of the map
  $\Sigma^{(1)}\PP^{n-1}\to \Sigma^{(1)}\PP^n$ which is $\A^1$-$(n-1)$-connected
  by Lemma~\ref{lem:pi1-computation}, it is simplicially $(n-1)$-connected.
  To apply \cite[Theorem 6.56]{morel.at} or \cite[Corollary 2.3.6]{aww} and
  conclude the desired $\A^1$-connectivity, it
  remains to prove that
  $\pia_0\Omega \Sigma^{(1)}\PP^n\cup_{\Sigma^{(1)}\PP^{n-1}}\SL_n$
  is strongly $\A^1$-invariant. The simplicial van Kampen theorem
  \cite[Corollary 3.5]{wendt.toric} provides that the Nisnevich sheaf
  associated with the presheaf of fundamental groups of
  $\Sing (\Sigma^{(1)}\PP^n)\cup_{\Sing(\Sigma^{(1)}\PP^{n-1})}\Sing(\SL_n)$
  (using $\A^1$-naiveness provided by \cite[Theorem 8.1]{morel.at} for $\GL_n$
  and \cite[Example 4.2.13]{ahw.2})
  coincides with $\KMil_2$. In particular, it is strictly $\A^1$-invariant.

  The long exact sequence diagram~(\ref{eq:fiber-seq-trans}) induces on
  $\A^1$-homotopy groups then provides epimorphisms
  $\pia_{j+1}S^{n+(n+1)} \iso \pia_j\Omega S^{n+(n+1)}\to \pia_j \hofib(\theta_{n+1})$
  for all $j<n$. In particular, Morel's $\A^1$-connectivity for $S^{n+(n+1)}$
  provides $\pia_j\hofib(\theta_{n+1})=0$ for $j<n-1$, so that $\theta_{n+1}$
  is at least $\A^1$-$(n-1)$-connected. To conclude the vanishing
  of $\pia_{n-1}\hofib(\theta_{n+1})$, observe that it is a quotient of
  $\pia_nS^{n+(n+1)}\iso \KMW_{n+1}$. The $\A^1$-homotopy
  groups induced by diagram~(\ref{eq:fiber-seq-trans}) are modules
  over $\pia_1\SL_n\iso \pia_1(\Sigma^{(1)}\PP^n\cup_{\Sigma^{(1)}\PP^{n-1}}\SL_n)
  \iso \KMil_2$, and all homomorphisms involved are $\KMil_2$-equivariant.
  However, the action of $\pia_1\SL_n$ on $\pia_j \Omega S^{n+(n+1)}$ is
  trivial because it factors through $\pia_1\ast$, as the commutative diagram
  \[
    \begin{tikzcd}
      \Omega S^{n+(n+1)} \ar[r] \ar[d,"\id"] & \SL_n \ar[r]\ar[d] & \SL_{n+1}\ar[d] \\
      \Omega S^{n+(n+1)} \ar[r] & \ast \ar[r] & S^{n+(n+1)}
    \end{tikzcd}
  \]
  of fiber sequences implies.
  Hence the action of $\pia_1(\Sigma^{(1)}\PP^n\cup_{\Sigma^{(1)}\PP^{n-1}}\SL_n)$
  on the quotient $\pia_j\hofib(\theta_{n+1})$ of $\pia_j\Omega S^{n+(n+1)}$
  is also trivial.
  It follows that the relative Hurewicz homomorphism
  $\pia_{n-1}\hofib(\theta_{n+1})\to \underline{H}_{n}^{\A^1}\cone(\theta_{n+1})$
  introduced in \cite[Section 4.2]{afh.localization}
  is an isomorphism by \cite[Theorem 4.2.1]{afh.localization}. The
  latter group is trivial by Lemma~\ref{lem:connectivity-psi-diagram}.
  Hence $\theta_{n+1}$ is $\A^1$-$n$-connected.
\end{proof}

\begin{theorem}\label{thm:suslin-conj}
  Let $F$ be an infinite field of characteristic different from $2$ and $3$,
  and $A$ an essentially smooth local $F$-algebra.
  The image of the Suslin-Hurewicz homomorphism
  $K^{\mathrm{Quillen}}_4(A) \to \KMil_4(A)$ coincides with $6\KMil_4(A)$.
\end{theorem}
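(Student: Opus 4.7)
The plan is to follow the Asok--Fasel--Williams strategy \cite{afw}: represent $K^Q_4(A)$ as $\pia_4\mathrm{B}\SL_\infty(A)$, reduce via the cellular filtration $\mathrm{B}^2\SL_3\hookrightarrow \mathrm{B}\SL_3\hookrightarrow \mathrm{B}\SL_\infty$, and identify the image in $\KMil_4$ via the structure of $\mathrm{B}^2\SL_3$ exhibited in Lemma~\ref{lem:extend-map}. The containment of the Suslin-Hurewicz image in $6\KMil_4$ is the classical upper bound of Suslin and Nesterenko-Suslin, so it suffices to produce enough classes to realize every section of $6\KMil_4$. By functoriality in $A$ this can be reformulated as a sheaf-level assertion, and the connectivity of $\mathrm{B}^2\SL_3\hookrightarrow \mathrm{B}\SL_3$ recorded above, together with the stabilization estimates of \cite{afw}, reduces the problem to the image of the composition $\pia_4\mathrm{B}^2\SL_3 \to \pia_4\mathrm{B}\SL_\infty \to \KMil_4$.

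Feeding the map $\gmap\colon \PP^2\smash \PP^2\to \mathrm{B}^2\SL_3$ from Lemma~\ref{lem:extend-map} into the long exact sequence of $\A^1$-homotopy sheaves associated with its cofiber $Z$ yields
\[ \pia_4\PP^2\smash \PP^2 \xrightarrow{\gmap_\ast} \pia_4\mathrm{B}^2\SL_3 \to \pia_4 Z \xrightarrow{\partial} \pia_3\PP^2\smash \PP^2 \xrightarrow{\gmap_\ast} \pia_3\mathrm{B}^2\SL_3. \]
The other ingredients assembled in this section then enter: the identification $\pia_4 Z\iso \mathbf{L}_5\directsum \KMil_3$ from~\eqref{eq:pi4z}, the subsheaf $\KMil_{4-\star}/6\subseteq \pia_3\PP^2\smash \PP^2$ generated by $(i\smash i)\circ\nu$ from Lemma~\ref{lem:pi3p2smashp2}, and an analogous description of $\pia_4\PP^2\smash \PP^2$ via the Spanier-Whitehead self-duality of $\PP^2$ (recalled after Lemma~\ref{lem:pi3p2smashp2}) together with Theorem~\ref{thm:endo-p2}.

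The key claim is that $\gmap_\ast$ injects the subsheaf $\KMil_{4-\star}/6$ of $\pia_3\PP^2\smash \PP^2$ into $\pia_3\mathrm{B}^2\SL_3$, equivalently that the boundary $\partial$ carries $\pia_4 Z$ into the complementary subsheaf $\pi_{2+(\star-1)}\PP^2\smash \kgl$ of the short exact sequence of Lemma~\ref{lem:pi3p2smashp2}. For the $\mathbf{L}_5$-summand this should follow from a weight count using its presentation as a quotient of $\KMW_5$ provided by~\eqref{eq:pi4z}; for the $\KMil_3$-summand it should follow by tracing the generator through the splitting~\eqref{eq:splitting} and the attaching data of the top cell of $\PP^2\smash \PP^2$ via $\gmap$ as built in Lemma~\ref{lem:extend-map}. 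Granted this, exactness produces classes in $\pia_4\mathrm{B}^2\SL_3$ whose images in $\KMil_4$, obtained by stabilizing to $\pia_4\mathrm{B}\SL_\infty=K^Q_4$ and applying the Suslin-Hurewicz map, realize precisely $6\KMil_4$; the factor $6$ arises from the order-$6$ cyclic quotient in Lemma~\ref{lem:pi3p2smashp2} (compare Theorem~\ref{thm:endo-p2}). Combined with the classical upper bound, this yields the desired equality on sections over $\Spec(A)$.

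The principal obstacle is verifying the injectivity of $\gmap_\ast$ restricted to the $\KMil_{4-\star}/6$ subsheaf and matching the arithmetic factors so that the produced classes hit exactly $6\KMil_4$ rather than a proper subsheaf. A secondary but essential obstacle is the translation between the $\A^1$-Hurewicz map implicit in the above sheaf-level computation and the classical Suslin-Hurewicz homomorphism $K^Q_4(A)\to \KMil_4(A)$ for essentially smooth local $A$ over the infinite field $F$; this bridge, developed by Asok-Fasel-Williams, is what allows the preceding sheaf-theoretic injection to be read off as the claimed statement on sections over $\Spec(A)$.
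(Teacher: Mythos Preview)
Your proposal isolates the right technical heart — the injectivity of $\gmap_\ast$ on the $\KMil_4/6$ subsheaf of $\pia_3\PP^2\smash\PP^2$, argued via the weight structure of $\pia_4 Z$ in~\eqref{eq:pi4z} — but the surrounding architecture is off by one homotopical degree, and a crucial identification is missing.

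The paper does \emph{not} work in $\pia_4$. It invokes \cite[Theorem~2.18]{afw} to reduce the statement to showing that the canonical surjection $\KMil_4/6\to\sheafS$ is an isomorphism, where $\sheafS=\ker\bigl(\pia_3\BSL_3\to\pia_3\BSL_4\bigr)$. Everything then happens in $\pia_3$: one produces a retraction $\sheafS\to\KMil_4/6$ by factoring $\pia_3 S^{3+(4)}\xrightarrow{((i\smash i)\circ\nu)_\ast}\pia_3\PP^2\smash\PP^2\to\underline\pi_{3+(0)}\PP^2\smash\PP^2$, whose image is the $\KMil_4/6$ of Lemma~\ref{lem:pi3p2smashp2}, through $\sheafS$. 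Your assertion that ``exactness produces classes in $\pia_4\mathrm{B}^2\SL_3$'' from injectivity on $\pia_3$ is a non sequitur: knowing that the boundary $\partial\colon\pia_4Z\to\pia_3\PP^2\smash\PP^2$ misses a subsheaf yields no $\pia_4$-classes whatsoever. Moreover, the map $\pia_4\BSL_3\to\pia_4\BSL_\infty$ is not known to be surjective, and Spanier--Whitehead duality is a stable statement that says nothing about the unstable sheaf $\pia_4\PP^2\smash\PP^2$, so the description you invoke for it is not available.

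The second gap is that you never verify that the composite $S^{3+(4)}\xrightarrow{\nu}S^{2+(2)}\xrightarrow{i\smash i}\PP^2\smash\PP^2\xrightarrow{\gmap}\BSL_3$ coincides, up to sign, with the canonical map $S^{3+(4)}\to\BSL_3$. Without this there is no link between the $\KMil_4/6$ subsheaf you analyse and $\sheafS$, hence none to the Suslin--Hurewicz map. The paper checks this via complex realization using \cite[(5.4)]{toda} and \cite[Proposition~8.2]{mukai.transfer}; only then does the injectivity of $\gmap_\ast$ on the image of $((i\smash i)\circ\nu)_\ast$ identify that image with $\sheafS$ and supply the desired section $\sheafS\to\KMil_4/6$.
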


\begin{proof}
  By \cite[Theorem 2.18]{afw} it suffices to prove that a certain
  canonical surjection $\KMil_4/6 \to \sheafS$, obtained from Suslin's
  Hurewicz homomorphism, is an isomorphism.
  One description of $\sheafS$ goes as follows. The $\A^1$-fiber sequence
  \[ S^{3+(4)}\to \BSL_3\to \BSL_4 \]
  of motivic spaces
  induces a long exact sequence of unstable homotopy sheaves terminating with
  \begin{equation}\label{eq:les-sheafs} \dotsm \to \pia_3\SL_4\to
    \pia_3 S^{3+(4)} \to \pia_3\BSL_3\to \pia_3\BSL_4 \to 0.
  \end{equation}
  The sheaf $\sheafS$ is isomorphic to the kernel of $\pia_3\BSL_3\to \pia_3\BSL_4$
  by \cite[Lemma 3.5]{af.bundles-spheres}.
  Homotopical stability provides that
  $\pia_3\BSL_4\iso \pia_3\BSL_\infty\iso \Kalg_3$
  is the third algebraic $K$-theory
  sheaf, as explained in \cite{afw}. Morel's unstable computations
  provide $\pia_3S^{3+(4)}\iso \KMW_4$ \cite[Theorem 1.23]{morel.at}.
  Hence there exists
  a surjection $\KMW_4\iso \pia_3S^{3+(4)}\to \sheafS$, which, as is
  also explained in \cite{afw}, factors over $\KMil_4/6$.
  In the case where $F$ is a field of characteristic $p>3$, the
  map $\KMil_4/6 \to \KMil_4[p^{-1}]/6$ is injective by \cite{izhboldin.torsion}.
  Therefore it
  suffices to prove that the induced map $\KMil_4[p^{-1}]/6\to \sheafS[p^{-1}]$
  is injective. In the following, the characteristic $p$ of the field $F$ may be
  implicitly inverted if $p>3$.
  
  Using the exact sequence~(\ref{eq:les-sheafs}), the image of
  $\pia_3\SL_4\to \pia_3S^{3+(4)}$ (induced by taking the last column
  of a matrix in $\SL_4$) contains at least the subsheaves
  $6\KMW_4$ and $\eta\KMW_5$, and hence their sum.
  Proposition~\ref{prop:image-diagram-psi} shows that this image
  coincides with the image
  of $\pia_3\Sigma^{(1)}\PP^3\cup_{\Sigma^{(1)}\PP^2}\SL_3\xrightarrow{\pia_3\theta_4} \pia_3\SL_4\to \pia_3S^{3+(4)}$.
  Passage to motivic suspension spectra simplifies the
  situation via the following commutative diagram
  \begin{equation*}
    \begin{tikzcd}
      \pia_3\Sigma^{(1)}\PP^3\cup_{\Sigma^{(1)}\PP^2}\SL_3\ar[r] \ar[d] &  \pia_3S^{3+(4)} \ar[d] \\
      \pistablesheaf_{3+(0)}\Sigma^{(1)}\PP^3\cup_{\Sigma^{(1)}\PP^2}\SL_3\ar[r] &
      \pistablesheaf_{3+(0)}S^{3+(4)}
    \end{tikzcd}
  \end{equation*}
  in which the vertical homomorphisms are induced by taking $\PP^1$-suspension
  spectra. In particular, the vertical homomorphism
  on the right hand side is an isomorphism
  by Morel's theorem.
  Hence the image in question is contained in the image of
  the homomorphism
  $\pistablesheaf_{3+(0)}\Sigma^{(1)}\PP^3\cup_{\Sigma^{(1)}\PP^2}\SL_3\to
  \pistablesheaf_{3+(0)}S^{3+(4)}$
  which coincides with the kernel of
  the homomorphism
  $\pistablesheaf_{3+(0)}S^{3+(4)} \to \pistablesheaf_{2+(0)} \SL_3$
  because of the cofiber sequence
  \[ \SL_3\to \Sigma^{(1)}\PP^3\cup_{\Sigma^{(1)}\PP^2} \SL_3 \to S^{3+(4)}. \]
  Using the map $\psi_3\colon \Sigma^{(1)}\PP^2\to \SL_3$,
  this kernel contains the kernel of
  $\pistablesheaf_{3+(0)}S^{3+(4)} \to \pistablesheaf_{2+(0)} \Sigma^{(1)}\PP^2$.
  These kernels are equal, because the map
  $\pistablesheaf_{2+(0)}\psi_3 \colon \pistablesheaf_{2+(0)}\Sigma^{(1)}\PP^2\to
  \pistablesheaf_{2+(0)}\SL_3$
  is not only surjective (a straightforward consequence of
  Lemma~\ref{lem:cofiber-psi3}), but injective as well (at least after inverting
  the exponential characteristic of $F$ if it is odd).
  The latter can be seen from Lemma~\ref{lem:cofiber-psi3}
  which supplies a homotopy cofiber sequence 
  \[ \Sigma^{(1)}\PP^2\xrightarrow{\psi_3}\SL_3\to S^{3+(5)}. \]
  It induces a map $S^{3+(5)}\to \Sigma^{1+(1)}\PP^2$ whose composition
  with the canonical quotient map $\Sigma^{1+(1)}\PP^2\to S^{3+(3)}$
  gives rise to an element in the Witt ring $\pistablesheaf_{3+(5)}S^{3+(3)}\iso
  \KMW_{-2}$ by Morel's theorem.
  This element
  is the zero element, because the image of
  $\pistablesheaf_{3+(5)}\Sigma^{1+(1)}\PP^2\to \pistablesheaf_{3+(5)}S^{3+(3)}$
  is zero by the short exact sequence~(\ref{eq:pi2p2}). Hence
  the map $S^{3+(5)}\to \Sigma^{1+(1)}\PP^2$ factors over the inclusion
  $\Sigma^{1+(1)}\PP^1\to \Sigma^{1+(1)}\PP^2$ and gives rise to
  an element in the group
  $\pistablesheaf_{3+(5)}S^{2+(2)} =\pistablesheaf_{1+(3)}\unit_F$
  which is trivial by \cite[Theorem 5.5]{rso.oneline}
  after inverting the characteristic of $F$ if it is odd.

  To summarize, the image of the homomorphism
  $\pia_3\SL_4\to \pia_3S^{3+(4)}$ of unstable $\A^1$-homotopy sheaves
  is contained in the kernel of the homomorphism of stable $\A^1$-homotopy
  sheaves
  $\pistablesheaf_{3+(0)}S^{3+(4)} \to \pistablesheaf_{2+(0)} \Sigma^{(1)}\PP^2$.
  Using $\PP^1$-stability, the latter homomorphism can be identified with
  the homomorphism
  $\pistablesheaf_{2-(1)}S^{2+(3)} \to \pistablesheaf_{2-(1)} \PP^2$
  induced by the attaching map $\gamma_2\colon \A^{3}\minus \{0\}\to \PP^2$
  of smooth schemes because of the cofibration sequence
  \[ \A^{3}\minus \{0\}\xrightarrow{\gamma_2}\PP^2 \hookrightarrow \PP^3.\]
  The composition
  \[ S^{2+(3)}\xrightarrow{\gamma_2}\PP^2\xrightarrow{q}S^{2+(2)} \] is
  nullhomotopic essentially because $2$ is even. 
  More precisely, by Morel's Theorem, it suffices to prove this after applying
  $\Sigma^{(1)}$.
  The composition of the attaching map $\Sigma^{(1)} \gamma_2$ with the
  canonical inclusion to $\Sigma^{(1)}\PP^3$ is nullhomotopic by construction,
  hence also the composition with $\psi_4\colon \Sigma^{(1)}\PP^3\to \SL_4$.
  By the commutative diagram
  appearing in Corollary~\ref{cor:psi-hocof-hofib} and exactness,
  the composition $\psi_3\circ \Sigma^{(1)}\gamma_2$ is in the image of
  the connecting map $\delta_3\colon \pia_{3+(4)}S^{3+(4)}\to \pia_{2+(4)}\SL_3$.
  The result follows from \cite[Lemma 3.5]{af.bundles-spheres} and
  Corollary~\ref{cor:psi-hocof-hofib}.
  Hence the attaching map $\gamma_2$ corresponds $\PP^1$-stably to an element
  in the image of $\pi_{2+(3)}\PP^1\to \pi_{2+(3)}\PP^2$. 
  This element is $\gamma_2=\pm 2(i\nu)\in \pi_{2+(3)}\PP^2\iso \ZZ/12\{i\nu\}$
  by Remark~\ref{rem:unstable}. It follows that the kernel
  of the induced homomorphism
  \[ \KMW_\star\iso \pistablesheaf_{2-(\star-3)}S^{2+(3)}
  \xrightarrow{\gamma_2=2i\circ \nu} (\KMil_{\star}/12) \{i\circ \nu\}\hookrightarrow
  \pistablesheaf_{2-(\star-3)}\PP^2\]
  is generated by $\eta$ and $6$ as a $\KMW$-module.
  In particular, in the relevant degree, the image
  of $\pia_{3}\SL_4\to \pia_3S^{3+(4)}$ is contained
  in the subsheaf $\eta\KMW_5+6\KMW_4$. Since it also contains
  this subsheaf, equality follows. This completes the proof.
\end{proof}

\begin{corollary}\label{cor:suslins-conjecture}
  Let $F$ be an infinite field with characteristic coprime to $6$. The inclusion
  $\BSL_3\hookrightarrow \BSL_\infty$ induces
  the following short exact sequence of sheaves:
  \[ 0 \to \KMil_4/6 \to \pia_3\BSL_3 \to \mathbf{K}^{\mathrm{Quillen}}_3 \to 0 \]
\end{corollary}

\begin{proof}
  This is a single case of \cite[Theorem 1.1]{af.bundles-spheres},
  having identified $\sheafS$ with $\KMil_4/6$ in
  the proof of Theorem~\ref{thm:suslin-conj}.
\end{proof}

\appendix 

\section{Modules over Milnor-Witt $K$-Theory}
\label{sec:modules-over-milnor}

Let $F$ be a field, usually suppressed from the notation.
The Milnor-Witt $K$-theory of $F$, as defined by Hopkins-Morel \cite{morel.zeroline},
is the unital graded associative
algebra generated by $\eta\in \KMW_{-1}\iso \pi_{1,1}\unit$ and 
the symbols $[u]\in \KMW_1(F)\iso \pi_{-1,-1}\unit_F$ for every unit $u\in F^\times$,
subject to the following relations.
\begin{description}
\item[Steinberg relation] $[u][v] = 0$ whenever $u+v =1$.
\item[$\eta$-twisted logarithm] $[uv]=[u]+[v]+\eta[u][v]$
\item[Commutativity] $[u]\eta=\eta[u]$
\item[Hyperbolic plane] $\eta+\eta^2[-1]=-\eta$
\end{description}
If $u_1,\dotsc,u_m\in F$ are units, then the element
$(1+\eta[u_1])+\dotsm +(1+\eta[u_m])\in \KMW_0(F)$
corresponds to the
quadratic form $\langle u_1,\dotsc,u_m\rangle$
given by the appropriate diagonal matrix under the
identification of $\KMW_{0}(F)$ with the Grothendieck-Witt ring $\GW(F)$
of $F$. Milnor 
$K$-theory \cite{milnor.k-quadratic} is expressed
as the quotient $\KMil_\star \iso \KMW_\star/\eta\KMW_\star$.
Often the grading will be suppressed from the notation.
Its reduction modulo 2 is denoted $\kmil$ for brevity.
If $A$ is a (graded) $\KMW$-module,
its degree $d$ part is $A_d$. For any $x\in \KMW_d$, the 
kernel and cokernel of multiplication with $x$ on $A_\ell$ are
denoted ${}_xA_\ell$ and $A_{\ell}/xA_{\ell-d}$. 
As a warm-up, $\KMil$-modules will be treated first.

\begin{theorem}\label{thm:ker-2-kmil}
  There is an equality $\{-1\}\KMil = {}_{2}\KMil$ of $\KMil$-modules.
\end{theorem}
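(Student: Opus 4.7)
The plan is to prove the two containments separately. The easy inclusion $\{-1\}\KMil\subseteq {}_{2}\KMil$ is immediate from the Milnor relation $2\{-1\}=\{(-1)^{2}\}=\{1\}=0$ in $\KMil_{1}$: for any $y\in \KMil$ one has $2(\{-1\}y)=(2\{-1\})y=0$, so $\{-1\}y$ lies in the 2-torsion submodule.

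For the reverse inclusion ${}_{2}\KMil\subseteq\{-1\}\KMil$, I would first reformulate it as a surjectivity statement. Because $2\{-1\}=0$, multiplication by $\{-1\}$ descends through reduction mod $2$ and has image lying in ${}_{2}\KMil$, producing a graded $\KMil$-module map $\bar\phi\colon \KMil/2\to {}_{2}\KMil$ whose image is exactly $\{-1\}\KMil$. Thus the theorem is equivalent to surjectivity of $\bar\phi$.

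To verify this surjectivity, my strategy is to invoke Voevodsky's norm residue isomorphism $\KMil_{n}(F)/2\iso H^{n}(F,\Z/2)$ (the resolved Milnor conjecture), under which $\bar\phi$ corresponds to the cup product with $\{-1\}\in H^{1}(F,\Z/2)$, i.e. the classical mod-$2$ Bockstein associated to $0\to \Z/2\to \Z/4\to \Z/2\to 0$. The main obstacle is that this isomorphism controls $\KMil/2$ rather than the integral 2-torsion of $\KMil$ directly; bridging the gap requires comparing the two relevant Bockstein long exact sequences — the integral one from $0\to \Z\xrightarrow{2}\Z\to \Z/2\to 0$ and the mod-$2$ one from $0\to \Z/2\to \Z/4\to \Z/2\to 0$ — and verifying that the image of the integral Bockstein into ${}_{2}\KMil$ matches the image of the mod-$2$ Bockstein after transporting along the norm residue isomorphism.

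An elementary alternative, more in keeping with the ``warm-up'' nature of the statement, would proceed by induction on degree. The base cases $n=0,1$ are immediate from ${}_{2}F^\times =\{\pm 1\}$. For the inductive step, I would use the explicit formula $2\{u_{1},\dots,u_{n}\}=\{u_{1}^{2},u_{2},\dots,u_{n}\}$ together with the identity $\{u,u\}=\{-1,u\}$ and a Bass-Tate residue argument for the quadratic extension $F(\sqrt{-1})/F$, whose restriction-transfer yields $N\circ \mathrm{res}=2$, to split any 2-torsion class into a piece divisible by $\{-1\}$ and a piece that descends from a lower degree.
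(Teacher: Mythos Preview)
Your overall strategy---reduce to surjectivity of the map $\bar\phi\colon\kmil\to{}_{2}\KMil$ given by multiplication by $\{-1\}$, then invoke Voevodsky's resolution of the Milnor conjecture---is the same as the paper's. However, there is a genuine error in your execution: cup product with $\{-1\}$ on $H^{\ast}(F,\Z/2)$ is \emph{not} the mod-$2$ Bockstein $\beta$. The Bockstein is a derivation, while multiplication by a fixed class is not; they agree on $H^{1}$ (since $\{a,a\}=\{-1,a\}$), but already on products of degree-one classes one has $\beta(ab)=\beta(a)b+a\beta(b)=2\{-1\}ab=0$, whereas $\{-1\}\cdot ab$ need not vanish. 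So your identification of $\bar\phi$ with $\beta$ fails, and the comparison of the two Bockstein sequences you sketch does not by itself produce the surjectivity onto ${}_{2}\KMil$ that you need.

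The paper avoids this detour by working directly with the \emph{integral} motivic Bockstein $\partial\colon \pi_{1+\bideg}\MZ/2\to \pi_{0+\bideg}\MZ=\KMil$ arising from $\MZ\xrightarrow{2}\MZ\to\MZ/2$, whose image is by construction exactly ${}_{2}\KMil$. The two facts that replace your faulty identification are: $\partial$ is $\KMil$-linear (it is a map of $\MZ$-modules), and $\partial(\tau)=\{-1\}$. Combined with Voevodsky's theorem that $\tau\cdot\colon \pi_{0+\bideg}\MZ/2\to\pi_{1+\bideg}\MZ/2$ is an isomorphism, this exhibits multiplication by $\{-1\}$ as the composite of three surjections $\KMil\twoheadrightarrow\kmil\xrightarrow[\cong]{\tau}\pi_{1+\bideg}\MZ/2\xrightarrow{\partial}{}_{2}\KMil$. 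You should also treat characteristic~$2$ separately; the paper dispatches it via Izhboldin's theorem that $\KMil$ has no $p$-torsion in characteristic~$p$. Finally, your ``elementary alternative'' is almost certainly not elementary: already the special case $\sqrt{-1}\in F\Rightarrow{}_{2}\KMil(F)=0$ is a \emph{consequence} of the Milnor conjecture rather than an independent input, and the transfer relation $N\circ\mathrm{res}=2$ you invoke gives no direct handle on $2$-torsion.
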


\begin{proof}
  This follows as an equality of zero modules
  from \cite{izhboldin.torsion} if $F$ has characteristic 2.
  Suppose now that the characteristic of $F$ is different from 2.
  Since $2\{-1\}=\{1\} = 0\in \KMil_1$, there is an inclusion
  $\{-1\}\KMil \subset {}_{2}\KMil$ of $\KMil$-modules.
  The cofiber sequence
  \[ \MZ\xrightarrow{2}\MZ\xrightarrow{\pr^\infty_2} \MZ/2
    \xrightarrow{\partial_\infty^2} \Sigma\MZ \]
  induces a short exact sequence
  \begin{equation}\label{eq:ses-mzmod2}
    0 \to \pi_{1+\bideg}\MZ/2\pi_{1+\bideg}\MZ \to \pi_{1+\bideg}\MZ/2
    \xrightarrow{\partial^2_\infty} {}_{2}\pi_{0+\bideg}\MZ \to 0
  \end{equation}
  of $\KMil$-modules. Let $\tau \in \pi_{1-(1)}\MZ/2 = h^{0,1}$ denote
  the class of $-1\in F$.
  Then $\partial^2_\infty(\tau)=\{-1\}\in \pi_{0-(1)}\MZ = \KMil_1$
  by inspecting the short exact sequence~(\ref{eq:ses-mzmod2}) in the given
  weight, using that $-1$ is the unique nontrivial unit of order two in $F$.
  Voevodsky's solution of the Milnor conjecture on Galois cohomology
  \cite{voevodsky.mz2}
  (or rather the Rost-Voevodsky solution of the Beilinson-Lichtenbaum
  conjecture relating motivic and \'etale cohomology with coefficients in $\ZZ/2$)
  provides that multiplication with $\tau$ is an
  isomorphism $\pi_{0+\bideg}\MZ/2 \iso \pi_{1+\bideg}\MZ/2$
  of $\KMil$-modules. It follows that multiplication with $\{-1\}$ on
  $\KMil$ factors as
  \[ \KMil =\pi_{0+\bideg}\MZ\xrightarrow{\pr^\infty_2}
    \pi_{0+\bideg}\MZ/2 \xrightarrow{\tau \cdot}
    \pi_{1+\bideg}\MZ/2 \xrightarrow{\partial^2_\infty} {}_{2}\pi_{0+\bideg}\MZ
    ={}_{2}\KMil \subset \KMil \]
  in which the first three maps are surjective. The result follows.
\end{proof}

\begin{lemma}\label{lem:extkm2}
  Let $A$ be a $\KMil$-module.
  There is an isomorphism 
  \[\Ext^1_{\KMil}(\kmil,A)\iso \!\!{\ }_{\{-1\}}A_0/2A_0\]
  which is natural in $A$.
\end{lemma}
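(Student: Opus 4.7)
The plan is to build a short projective resolution of $\kmil$ using Theorem~\ref{thm:ker-2-kmil} to identify the kernel of multiplication by $2$ on $\KMil$, and then to read off $\Ext^1$ from the resulting long exact sequence.

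The starting point is the short exact sequence of graded $\KMil$-modules
\[ 0 \to 2\KMil \to \KMil \to \kmil \to 0. \]
Since $\KMil$ is free of rank one over itself, $\Ext^{\geq 1}_{\KMil}(\KMil,A)=0$ for every $\KMil$-module $A$, so the induced long exact sequence collapses to
\[ 0 \to \Hom_{\KMil}(\kmil,A) \to \Hom_{\KMil}(\KMil,A) \to \Hom_{\KMil}(2\KMil,A) \to \Ext^1_{\KMil}(\kmil,A) \to 0, \]
and the task reduces to computing the cokernel of the middle map in the degree under consideration.

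The next step is to describe $2\KMil$ as a cyclic $\KMil$-module. Multiplication by $2$ defines a surjection $\KMil\to 2\KMil$ whose kernel is ${}_2\KMil$, and Theorem~\ref{thm:ker-2-kmil} identifies ${}_2\KMil$ with $\{-1\}\KMil$. Hence
\[ 2\KMil \;\cong\; \KMil/\{-1\}\KMil \]
with the generator $2 \in 2\KMil_0$ corresponding to $1 \in (\KMil/\{-1\}\KMil)_0$. I would then read off the degree $0$ part of the relevant Hom groups: a degree $0$ map $\KMil\to A$ is determined by its value on $1$, giving $\Hom_{\KMil}(\KMil,A)_0 \cong A_0$; a degree $0$ map $2\KMil \to A$ is determined by the image of the generator $2$, which must be killed by $\{-1\}$, giving $\Hom_{\KMil}(2\KMil,A)_0 \cong {}_{\{-1\}}A_0$. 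Under these identifications, the inclusion $2\KMil\hookrightarrow \KMil$ is sent to multiplication by $2$: an element $a\in A_0$ corresponding to $1\mapsto a$ restricts to the map $2\mapsto 2a$ in ${}_{\{-1\}}A_0$.

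Putting these together yields
\[ \Ext^1_{\KMil}(\kmil,A) \;\cong\; {}_{\{-1\}}A_0 \big/ 2A_0, \]
and the identification is natural in $A$ because the whole construction (the short exact sequence, the isomorphism $2\KMil\cong \KMil/\{-1\}\KMil$, and the evaluation-at-generator identification of Hom groups) is functorial in $A$. I do not expect serious obstacles: once Theorem~\ref{thm:ker-2-kmil} is invoked to control $\ker(\cdot 2)$, the calculation is formal homological algebra. The only point requiring a moment's care is keeping track of the grading so that the generator of $2\KMil$ really sits in degree $0$, so that the answer lands in $A_0$ rather than some shifted degree.
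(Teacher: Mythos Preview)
Your argument is correct and follows essentially the same route as the paper: both start from the short exact sequence $0\to 2\KMil\to\KMil\to\kmil\to 0$, identify $2\KMil\cong\KMil/\{-1\}\KMil$ via Theorem~\ref{thm:ker-2-kmil}, and read off $\Hom_{\KMil}(2\KMil,A)\cong{}_{\{-1\}}A_0$ to compute the cokernel. If anything, you are slightly more explicit than the paper in verifying that the restriction map $A_0\to{}_{\{-1\}}A_0$ is multiplication by $2$.
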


\begin{proof}
  The short exact sequence 
  \[ 0 \to 2\KMil \to \KMil \to \kmil \to 0\]
  induces an exact sequence
  \[ 0 \to \Hom_{\KMil}(\kmil,A)\to
  \Hom_{\KMil}(\KMil,A) \to
  \Hom_{\KMil}(2\KMil,A) \to
  \Ext^1_{\KMil}(\kmil,A)\to 0. \]
  The short exact sequence 
  \[ 0\to {}_2\KMil \to \KMil \to 2\KMil \to 0 \]
  induces an exact sequence
  \[ 0 \to \Hom_{\KMil}(2\KMil,A)\to
  \Hom_{\KMil}(\KMil,A) \to
  \Hom_{\KMil}({}_2\KMil,A) \to
  \Ext^1_{\KMil}(2\KMil,A)\to 0. \]
  The composition $\KMil \to 2\KMil \to \KMil$ coincides with multiplication
  by $2$, whence naturally $\Hom_{\KMil}(\kmil,A) = {}_2A_0$.
  In order to identify $\Hom_{\KMil}(2\KMil,A)$ in a similar way,
  observe the equality 
  \[ {}_2\KMil = \{-1\}\KMil \]
  of $\KMil$-modules from Theorem~\ref{thm:ker-2-kmil}.
  The composition $\KMil \to \{-1\}\KMil = {}_2\KMil\to \KMil$ 
  coincides with multiplication
  by $\{-1\}$, whence naturally $\Hom_{\KMil}(2\KMil,A) = {}_{\{-1\}}A_0$.
  The statement follows.
\end{proof}

\begin{lemma}\label{lem:extkm}
  Let $A$ be a $\KMW$-module.
  There is a natural isomorphism 
  \[\Ext^1_{\KMW}(\KMil,A)\iso {\ }_{\hyper}A_{-1}/\eta A_0.\]
\end{lemma}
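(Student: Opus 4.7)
The argument closely mirrors that of Lemma~\ref{lem:extkm2}, with Theorem~\ref{thm:ker-eta-pi0} playing the role that Theorem~\ref{thm:ker-2-kmil} played there. By definition of Milnor $K$-theory in the appendix, there is a short exact sequence
\[
0 \to \eta\KMW \to \KMW \to \KMil \to 0
\]
of $\KMW$-modules. Applying $\Hom_{\KMW}(-,A)$ and using that $\KMW$ is projective over itself yields the four-term exact sequence
\[
0 \to \Hom_{\KMW}(\KMil,A) \to \Hom_{\KMW}(\KMW,A) \xrightarrow{\psi} \Hom_{\KMW}(\eta\KMW,A) \to \Ext^1_{\KMW}(\KMil,A) \to 0,
\]
and the plan is to identify each term in degree zero and read off the formula.

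The middle term $\Hom_{\KMW}(\KMW,A)$ is just $A_0$, via evaluation at $1$. To compute $\Hom_{\KMW}(\eta\KMW,A)$, observe that $\eta\KMW$ is cyclic, generated by $\eta\in \KMW_{-1}$, with the canonical surjection $\KMW \twoheadrightarrow \eta\KMW$, $x \mapsto \eta x$, having kernel ${}_\eta\KMW$. By Theorem~\ref{thm:ker-eta-pi0} this kernel equals $\hyper\KMW$, so $\eta\KMW \iso \KMW/\hyper\KMW$ with generator in degree $-1$. A degree-zero $\KMW$-module homomorphism $\eta\KMW \to A$ is therefore determined by the image $a \in A_{-1}$ of $\eta$, subject to the single relation $\hyper a = 0$, which gives $\Hom_{\KMW}(\eta\KMW,A) \iso {}_{\hyper}A_{-1}$ naturally in $A$.

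It remains to identify the map $\psi\colon A_0 \to {}_{\hyper}A_{-1}$. It is induced by restriction along $\eta\KMW \hookrightarrow \KMW$: the $\KMW$-linear map $\KMW \to A$ sending $1\mapsto a$ restricts to the map $\eta\KMW \to A$ sending the generator $\eta$ to $\eta a$, so $\psi(a)=\eta a$. The containment $\eta A_0 \subset {}_{\hyper}A_{-1}$ is automatic from the identity $\hyper\eta = 0$ in $\KMW$ (which is how $\hyper$ appeared in Theorem~\ref{thm:ker-eta-pi0} in the first place). Exactness of the four-term sequence therefore delivers the natural isomorphism
\[
\Ext^1_{\KMW}(\KMil,A) \iso {}_{\hyper}A_{-1}/\eta A_0.
\]
There is no real obstacle: once Theorem~\ref{thm:ker-eta-pi0} is available, the statement is simply the computation of an $\Ext$ group from a two-term presentation, and the analogy with the proof of Lemma~\ref{lem:extkm2} makes the bookkeeping straightforward.
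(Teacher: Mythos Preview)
Your proof is correct and follows essentially the same route as the paper: both use the presentation $0\to \eta\KMW\to \KMW\to \KMil\to 0$, invoke Theorem~\ref{thm:ker-eta-pi0} to identify $\eta\KMW\iso \KMW/\hyper\KMW$ (shifted), and read off $\Hom_{\KMW}(\eta\KMW,A)\iso {}_{\hyper}A_{-1}$ with the restriction map being multiplication by $\eta$. If anything, your version is slightly more explicit in naming the connecting map $\psi(a)=\eta a$, which the paper leaves implicit.
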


\begin{proof}
  Abbreviate $\Hom(A,B):=\Hom_{\KMW}(A,B)$.
  The short exact sequence 
  \[ 0 \to \eta\KMW \to \KMW \to \KMil \to 0\]
  induces an exact sequence
  \[ 0 \to \Hom(\KMil,A)\to
  \Hom(\KMW,A) \to
  \Hom(\eta\KMW,A) \to
  \Ext^1_{\KMW}(\KMil,A)\to 0. \]
  The short exact sequence 
  \[ 0\to {}_\eta\KMW_{\star+1} \to \KMW_{\star+1} \to \eta\KMW \to 0 \]
  induces an exact sequence
  \[ 0 \to \Hom(\eta\KMW,A)\to
  \Hom(\KMW_{\star+1},A) \to
  \Hom({}_\eta\KMW_{\star+1},A) \to
  \Ext^1_{\KMW}(\eta\KMW,A)\to 0. \]
  The composition $\KMW_{\star+1} \to \eta\KMW \to \KMW$ 
  coincides with multiplication
  by $\eta$, whence naturally $\Hom(\KMil,A) = {}_{\eta}A_0$.
  In order to identify $\Hom(\eta\KMW,A)$ in a similar way,
  observe the existence of a further short exact sequence
  \[ 0\to {}_{\hyper}\KMW\to \KMW \to \hyper\KMW={}_\eta\KMW \to 0 \]
  of $\KMW$-modules, the surjection being induced by multiplication
  with $\hyper$. Note that $\hyper\KMW = \hyper \KMil = {}_{\eta}\KMW$
  by Theorem~\ref{thm:ker-eta-pi0}.
  The composition $\KMW_{\star+1} \to  {}_{\eta}\KMW_{\star+1}\to \KMW_{\star+1}$ 
  coincides with multiplication
   by $\hyper$, whence naturally $\Hom_{\KMW}(\eta\KMW,A) = {}_{\hyper}A_{-1}$.
   The statement follows.
\end{proof}  

\begin{lemma}\label{lem:extkm2inKMW}
  Let $A$ be a $\KMW$-module.
  There is a natural exact sequence
  \[ 0\to {}_{\{-1\}}\bigl({}_{\eta}A_0\bigr)/2\bigl({}_{\eta}A_0\bigr) \to
    \Ext^1_{\KMW}(\kmil,A) 
    \to {}_{\hyper}A_{-1}/\eta A_0 \]
\end{lemma}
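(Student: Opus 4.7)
The plan is to apply the contravariant functor $\Hom_{\KMW}(-,A)$ to the short exact sequence of graded $\KMW$-modules
\[ 0 \to 2\KMil \to \KMil \to \kmil \to 0 \]
and read off the claimed three-term exact sequence from the first four terms of the resulting long exact sequence
\[ \dotsm \to \Hom_{\KMW}(\KMil,A) \to \Hom_{\KMW}(2\KMil,A) \to \Ext^1_{\KMW}(\kmil,A) \to \Ext^1_{\KMW}(\KMil,A) \to \dotsm. \]
Lemma~\ref{lem:extkm} supplies $\Ext^1_{\KMW}(\KMil,A) \iso {}_{\hyper}A_{-1}/\eta A_0$, so the right-hand map in the statement is the one induced by $\KMil \twoheadrightarrow \kmil$. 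The real content is therefore the identification of the cokernel of the restriction map $\Hom_{\KMW}(\KMil,A) \to \Hom_{\KMW}(2\KMil,A)$.

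First I would recall, as in the proof of Lemma~\ref{lem:extkm}, that $\Hom_{\KMW}(\KMil,A) = {}_{\eta}A_0$. To pin down $\Hom_{\KMW}(2\KMil,A)$, consider the surjection $\KMW \twoheadrightarrow 2\KMil$ sending $1 \in \KMW_0$ to $2 \in 2\KMil_0$. Its kernel is the preimage of ${}_{2}\KMil \subset \KMil$ under $\KMW \twoheadrightarrow \KMil$. Here Theorem~\ref{thm:ker-2-kmil} is the key input: it identifies ${}_{2}\KMil$ with $\{-1\}\KMil$, so the preimage is $\eta\KMW + \{-1\}\KMW$ and there results a presentation
\[ 2\KMil \iso \KMW/(\eta\KMW + \{-1\}\KMW). \]
Consequently $\Hom_{\KMW}(2\KMil,A) = \{b \in A_0 : \eta b = 0,\ \{-1\}b = 0\} = {}_{\{-1\}}\bigl({}_{\eta}A_0\bigr)$.

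Next I would identify the restriction map. If $\phi\colon \KMil \to A$ satisfies $\phi(1)=a \in {}_{\eta}A_0$, then $\phi|_{2\KMil}(2) = 2a$, and indeed $2a$ lies in ${}_{\{-1\}}({}_{\eta}A_0)$ because $\{-1\}\cdot 2a = -\eta\{-1\}^2 a = 0$, using the relation $2\{-1\} = -\eta\{-1\}^2$ in $\KMW_1$ coming from the $\eta$-twisted logarithm together with $\eta a = 0$. Under the two identifications above the restriction map is therefore multiplication by $2$ from ${}_{\eta}A_0$ to ${}_{\{-1\}}({}_{\eta}A_0)$, with image $2\bigl({}_{\eta}A_0\bigr)$. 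Its cokernel is then exactly ${}_{\{-1\}}({}_{\eta}A_0)/2\bigl({}_{\eta}A_0\bigr)$, which the connecting map of the long exact sequence injects into $\Ext^1_{\KMW}(\kmil,A)$; exactness with the next term then yields the desired sequence. Naturality in $A$ is automatic from the functoriality of the whole construction.

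The hard part will be the presentation of $2\KMil$ as a $\KMW$-module, which rests crucially on Theorem~\ref{thm:ker-2-kmil}. Without the equality ${}_{2}\KMil = \{-1\}\KMil$ the supplementary relation $\{-1\}b = 0$ on $\Hom_{\KMW}(2\KMil,A)$ is invisible, and the left-hand term of the exact sequence takes a different and less explicit form.
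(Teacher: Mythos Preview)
Your argument is correct and follows essentially the same route as the paper: both apply $\Hom_{\KMW}(-,A)$ to the short exact sequence $0\to 2\KMil\to\KMil\to\kmil\to 0$, invoke Lemma~\ref{lem:extkm} for the $\Ext^1_{\KMW}(\KMil,A)$ term, and use Theorem~\ref{thm:ker-2-kmil} to identify $\Hom_{\KMW}(2\KMil,A)$. The only organizational difference is that the paper packages the $\Hom$-computation via the change-of-rings isomorphism $\Hom_{\KMW}(B,A)\cong\Hom_{\KMil}(B,{}_{\eta}A)$ for $\eta B=0$ and then quotes Lemma~\ref{lem:extkm2}, whereas you unwind this by writing down the $\KMW$-presentation $2\KMil\cong\KMW/(\eta\KMW+[-1]\KMW)$ directly; the content is the same. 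One notational quibble: in $\KMW$ the generator is $[-1]$, with $\{-1\}$ reserved for its image in $\KMil$, so your relation should read $2[-1]=-\eta[-1]^2$.
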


\begin{proof}
  Abbreviate $\Hom(A,B):=\Hom_{\KMW}(A,B)$ and
  $\Ext(A,B):=\Ext_{\KMW}(A,B)$. The short exact sequence 
  \[ 0 \to 2\KMil \to \KMil \to \kmil \to 0\]
  of $\KMW$-modules induces an exact sequence
  \[ 0 \to \Hom(\kmil\!,\!A)\to
  \Hom(\KMil\!,\!A) \to
  \Hom(2\KMil\!,\!A) \to
  \Ext^1(\kmil\!,\!A)\to 
  \Ext^1(\KMil\!,\!A)\to \dotsm . \]
  The natural map
  $\Hom_{\KMW}(B,A)\to \Hom_{\KMil}(B, {}_{\eta}A)$
  is an isomorphism whenever $\eta B = 0$.
  The result then follows from Lemma~\ref{lem:extkm2} and
  Lemma~\ref{lem:extkm}.
\end{proof}

\section{Toda brackets}
\label{sec:toda-brackets}

Consider three composable maps
\[ \D \xrightarrow{\gamma} \E \xrightarrow{\beta}\F \xrightarrow{\alpha} \GG \]
of pointed motivic spaces or motivic spectra (over any base scheme)
such that the compositions $\beta\circ \gamma$ and $\alpha\circ \beta$ are
nullhomotopic.
Choosing nullhomotopies $\cone(\D):= \D\smash (\Delta^1,1)\to \F$
and $\cone(\E)=\E\smash (\Delta^1,1)\to \GG$
defines extensions of these compositions displayed
in the diagram
\begin{equation*}
  \begin{tikzcd}
    & \cone(\D) \ar[rd] & & \\
    \D \ar[ru] \ar[r,"\gamma"] & \E \ar[r,"\beta"] \ar[rd] & \F \ar[r,"\alpha"] & \GG \\
    & & \cone(\E) \ar[ru] &
  \end{tikzcd}
\end{equation*}
and hence a map $\Sigma \D \simeq \cone(\D)\cup_{\D}\cone(\E) \to \GG$,
where the identification of $\Sigma \D$ with the displayed pushout is
canonical. 
The {\em Toda bracket\/} 
\[ \langle \alpha,\beta,\gamma\rangle \subset [\Sigma\D,\GG] \]
is the subset of (pointed) homotopy classes maps which can be obtained this way.
It depends only on the homotopy classes of $\alpha,\beta,\gamma$.
This definition provides a secondary composition on the level
of homotopy categories and is sufficiently general to apply in manifold cases,
for example in any pointed simplicial model category.
In case $[\Sigma\D,\GG]$ is abelian (which is
always the case for motivic spectra), then
$\langle \alpha,\beta,\gamma\rangle$ is a coset of the
subgroup $\alpha\circ [\Sigma\D,\F]+[\Sigma\E,\GG] \circ \Sigma\gamma
\subset [\Sigma\D,\GG]$ \cite[Lemma 1.1]{toda}.
An equivalent description of $\langle \alpha,\beta,\gamma\rangle$
is obtained by taking 
homotopy classes of compositions
\[ \Sigma\D\xrightarrow{\gamma^\prime} \cone(\beta)=\cone(\E)\cup_{\E}\F
\xrightarrow{\widetilde{\alpha}} \GG\]
where $\widetilde{\alpha}$ is an extension of $\alpha$
to $\cone(\beta)$ and $\gamma^\prime$ is a coextension of $\gamma$ with respect
to $\beta$ \cite[Prop.~1.7]{toda}.
Any coextension of $\gamma$ with respect to $\beta$ is a lift of
$\Sigma\gamma$ along $\cone(\beta)\to \Sigma \E$.
In the stable case of motivic spectra, a coextension of $\gamma$
with respect to $\beta$ is the same
as a lift of $\Sigma\gamma$ along $\cone(\beta)\to \Sigma \E$
\cite[Prop.~5.2]{gray.sphere-origin}.
Two straightforward extensions of statements from \cite{toda}
will be stated explicitly for reference purposes.

\begin{proposition}\label{prop:toda-brackets-coext}
  Let three composable maps
  \[ \D \xrightarrow{\gamma} \E \xrightarrow{\beta}\F \xrightarrow{\alpha} \GG \]
  of pointed motivic spaces or motivic spectra (over any base scheme)
  be given such that the compositions $\beta\circ \gamma$ and
  $\alpha\circ \beta$ are nullhomotopic. Let
  $\GG\xrightarrow{i} \cone(\alpha)\xrightarrow{q} \Sigma \F$
  denote the canonical maps. There is an equality
  \[ - i\circ \langle \alpha,\beta,\gamma\rangle
  = \{ \beta^\prime\circ \Sigma\gamma \colon
  \beta^\prime \ \mathrm{coextension}\ \mathrm{of}\ \beta\ \mathrm{w.r.t.}\ \alpha\}\]
  of subsets of $[\Sigma\D,\GG]$.
\end{proposition}

\begin{proof}
  This follows by translating \cite[Proposition 1.8]{toda} to
  the motivic setting.
\end{proof}

\begin{proposition}\label{prop:toda-brackets}
  Let three composable maps
  \[ \D \xrightarrow{\gamma} \E \xrightarrow{\beta}\F \xrightarrow{\alpha} \GG \]
  of pointed motivic spaces or motivic spectra (over any base scheme)
  be given such that the compositions $\beta\circ \gamma$ and
  $\alpha\circ \beta$ are nullhomotopic. Then for every
  $\lambda\in \langle \alpha,\beta,\gamma\rangle$ there exists an
  extension $\widetilde{\beta}\colon \cone(\gamma)\to \F$ of $\beta$
  such that the diagram
  \begin{equation}\label{eq:toda-bracket}
    \begin{tikzcd}
      \cone(\gamma)\ar[r,"{\mathrm{can.}}"] \ar[d,"\widetilde{\beta}"] &
      \Sigma\D\ar[d,"\lambda"] \\
      \F \ar[r,"\alpha"] & \GG
    \end{tikzcd}
  \end{equation}
  commutes up to homotopy. Conversely, for every 
  extension $\widetilde{\beta}\colon \cone(\gamma)\to \F$ of $\beta$
  there exists an element
  $\lambda\in \langle \alpha,\beta,\gamma\rangle$ such that 
  diagram~(\ref{eq:toda-bracket}) commutes up to homotopy.
\end{proposition}

\begin{proof}
  This follows by translating \cite[Proposition 1.9]{toda} to
  the motivic setting.
\end{proof}

By adjointness, an
equivalent description in terms of maps to loop spaces can be given.
From the definition it follows quite immediately that any
functor $\Phi$ induced by a pointed simplicial left Quillen functor on
homotopy categories preserves Toda brackets in the sense of providing
an inclusion
\[ \Phi(\langle \alpha,\beta,\gamma\rangle) \subset
\langle \Phi(\alpha),\Phi(\beta),\Phi(\gamma)\rangle \]
whenever the Toda bracket $\langle \alpha,\beta,\gamma\rangle$
is defined. In particular, if the target Toda bracket
$\langle\Phi(\alpha),\Phi(\beta),\Phi(\gamma)\rangle$ does not contain
the zero element, neither does the source Toda bracket
$\langle\alpha,\beta,\gamma\rangle$. This applies in particular
to base change functors, stabilization, and to complex and \'etale realization.

For example,
the Toda bracket $\langle 2,\eta_\Top,2\rangle\subset \pi_5S^3$
in the pointed homotopy category of topological spaces defined by
\[ S^4\xrightarrow{2} S^4\xrightarrow{\eta_\Top} S^3\xrightarrow{2} S^3 \]
consists of the single element $\eta_\Top^2\colon S^5\to S^3$
by \cite[Lemma 5.2 and (5.2)]{toda}. 
With the help of the unstable considerations over
$\Spec(\ZZ)$ in \cite{dugger-isaksen.hopf}
(see in particular \cite[Remark A.10]{dugger-isaksen.hopf}),
the maps
\[ S^{2+(2)} \xrightarrow{\hyper} S^{2+(2)}\xrightarrow{\eta} S^{2+(1)}
\xrightarrow{\hyper} S^{2+(1)} \]
define a Toda bracket
$\langle\hyper,\eta,\hyper\rangle \subset [S^{3+(2)},S^{2+(1)}]$.
Comparison with the
$\PP^1$-stable situation and \cite[Theorem 5.5]{rso.oneline}
shows that any element in this Toda bracket maps to
$\eta\circ \eta_\Top\in \pi_{1+(1)}\unit_F$ modulo the subgroup
$2(\KMil_1(F)/24)$ generated by $\nu$ over fields $F$
of characteristic not two.
Similarly, any element in the Toda bracket
\[ \langle \eta,\hyper,\eta \rangle \subset [S^{3+(3)},S^{2+(1)}] \]
maps to $\pm 6\nu \in \pi_{1+(2)}\unit_F$ over fields $F$
of characteristic not two or three, by \cite[Theorem 5.5]{rso.oneline}
and \cite[(5.4), Prop.~5.6 and its proof]{toda}.
See \cite[Prop.~4.1]{rondigs.moore} for these two and further examples.

{\bf Acknowledgements.}
The author cordially thanks Aravind Asok, Jean Fasel and Ben Williams for their
helpful comments, as well as several
pleasant discussions on \cite{afw}. The anonymous referee provided many valuable
comments and corrections for which the author is sincerely grateful.
The author  would also like to thank the Isaac Newton Institute for Mathematical Sciences, Cambridge, for support and hospitality during the programme ``$K$-theory, algebraic cycles and motivic homotopy theory'' where the revision of this paper was undertaken. This work was supported by EPSRC grant no EP/R014604/1.
Finally, I thankfully acknowledge support by the DFG priority programme 1786 "Homotopy theory and algebraic geometry", the DFG project ``Algebraic bordism spectra'', and the RCN Frontier Research Group Project no.~312472 "Equations in motivic homotopy".
The hospitality of the Universitetet i Oslo is much appreciated.

\end{document}